\newcommand{\abs}[1]{\lvert#1\rvert}
\newcommand{\F}{\mathcal{F}}
\newcommand{\K}{\mathcal{K}}
\newcommand{\G}{\mathcal{G}}
\newcommand{\Z}{\mathbb{Z}}
\newcommand{\R}{\mathbb{R}}
\newcommand{\N}{\mathbb{N}}
\newcommand{\Q}{\mathbb{Q}}
\newcommand{\Pp}{\mathcal{P}}
\newcommand{\real}{\mathbb{R}}
\newcommand{\cH}{\mathcal{H}}
\newcommand{\bigsetcond}[2]{\bigl\{ #1 \,:\, #2 \bigr\}}
\DeclareMathOperator{\inte}{int}
\DeclareMathOperator{\conv}{conv}
\newtheorem{theorem}{Theorem}[section]
\newtheorem{prop}[theorem]{Proposition}
\newtheorem{lemma}[theorem]{Lemma}
\newtheorem{coro}[theorem]{Corollary}
\newtheorem*{conj}{Conjecture}
\theoremstyle{definition}
\newtheorem{definition}[theorem]{Definition}
\newtheorem{example}[theorem]{Example}
\theoremstyle{remark}
\numberwithin{equation}{section}
\begin{document}
\title[$S$-Helly numbers]{Helly numbers of Algebraic Subsets of $\R^d$}

\author{J. A. De Loera}
\author{R. N. La Haye}
\author{D. Oliveros}
\author{E. Rold\'an-Pensado}
\address[J. A. De Loera, R. N. La Haye]{Department of Mathematics, UC Davis}
\email{deloera@math.ucdavis.edu, rlahaye@math.ucdavis.edu}
\address[D. Oliveros, E. Rold\'an-Pensado]{Instituto de Matem\'aticas, UNAM campus Juriquilla}
\email{dolivero@matem.unam.mx, e.roldan@im.unam.mx}

\keywords{ Helly-type theorems, Convexity spaces, Lattices, Additive groups, Colorful theorems.}

\begin{abstract} We study $S$-convex sets, which are the geometric objects obtained as the intersection of the usual convex sets in $\R^d$ with a proper subset $S\subset \R^d$.  We contribute new results  about their $S$-Helly numbers. We extend  prior work for $S=\R^d$, $\Z^d$, and $\Z^{d-k}\times\R^k$; we give sharp bounds on the $S$-Helly numbers in several new cases. We  considered the situation for low-dimensional $S$ and for sets $S$ that have some algebraic structure, in particular when $S$ is an arbitrary subgroup of $\R^d$ or when $S$ is the difference between a lattice and some of its sublattices.  By abstracting the ingredients of  Lov\'asz method we obtain colorful versions of many monochromatic Helly-type results, including several colorful versions of our own results.  
\end{abstract}

\maketitle

\section{Introduction}
Eduard Helly stated his fundamental theorem slightly under a century ago \cite{originalHelly}.
It says that the members of a finite family of convex sets in $\R^d$ intersect, if every $d+1$ of them intersect (see \cite{Mbook} for the basic introduction to this part of convexity). Since its discovery Helly's theorem has found many generalizations, extensions and applications in many areas of mathematics
(see  \cite{DGKsurvey,de2015quantitative,Eckhoffsurvey93,Wen1997} and references therein). Continuing the work of many authors (see e.g., \cite{AW2012,Doignon:1981fv,hoffman,Jamison:1981wz, Kay:1971uf, convexityspaces-vandevel} and the many references therein), our paper presents new versions of Helly's theorem where the intersections in the hypothesis and conclusions of the theorems are restricted to exist in a proper subset $S$ of $\R^d$.
The key challenge is to compute the associated $S$-Helly number.  It is fair to say that applications in optimization have prompted already many papers about $S$-Helly numbers \cite{AW2012,Bel1976,clarkson,conforti-unp,ViolatorSpaces2008,hoffman,Sca1977}. The interest in our new $S$-Helly numbers is motivated not just by old and even new (see e.g., \cite{chanceopt2015}) applications to optimization, but also in relation to algebraic structures for $S$ and new applications to algebraic computation with polynomials \cite{algrandomsampling}, where more sophisticated values for the variables appear.

The 1970s and 1980s saw large growth in the research of \emph{abstract convexity} where
Helly-type theorems were explored in abstract settings beyond Euclidean spaces.  In this article, given a proper subset $S \subset \R^d$, 
we consider the \emph{convexity space}  whose convex sets are the intersections of the standard convex sets in $\R^d$ with $S$ (see \cite{arochabrachomontejano,Doignon:1981fv,hoffman,Jamison:1981wz, Kay:1971uf, convexityspaces-vandevel} and the many references therein for more on abstract convexity spaces). For example, there is a Helly-type theorem that talks about the existence of intersections over the \emph{integer lattice} $\Z^d$, proved by Doignon \cite{Doi1973} (later rediscovered in \cite{Bel1976,hoffman,Sca1977}); it states that a finite family of convex sets in $\R^d$ intersect at a point of $\Z^d$ if every $2^d$ of members of the family intersect at a point of $\Z^d$. A second example is the work of Averkov and Weismantel \cite{AW2012} who gave a \emph{mixed} version of Helly's and Doignon's theorems which includes them both. This time the intersection of the convex sets is required to be in $\Z^{d-k}\times\R^k$ and this can be guaranteed if every $2^{d-k}(k+1)$ sets intersect in such a point (their formula had been previously stated by A.J. Hoffmann in \cite{hoffman}).  Note that the size of subfamilies  that guarantees an intersection of all members of the family in $S$ depends on $S$. These are the \emph{Helly numbers of $S$} from our title, which we now define precisely.

For a nonempty family $\mathcal K$ of sets, the \emph{Helly number} $h=h(\mathcal K)\in\N$ of $\mathcal K$ is defined as the smallest number satisfying the following:
\begin{align}\label{helly:cond}
	& \forall i_1,\ldots,i_h \in [m] : F_{i_1} \cap \cdots \cap F_{i_h} \neq \emptyset && \Longrightarrow && F_1 \cap \cdots \cap F_m \neq \emptyset
\end{align}
for all $m \in\N$ and $F_1, \ldots, F_m \in \mathcal K$. If no such $h$ exists, then $h(\mathcal K):=\infty$. E.g., for the traditional Helly's theorem, $\mathcal K$ is the family of all convex subsets of $\R^d$.

Finally for $S \subseteq \real^d$ we define
\begin{equation*} 
	h(S) := h\bigl( \bigsetcond{S \cap K}{K \subset \real^d \ \text{is convex\,} }\bigr).
\end{equation*}
That is, $h(S)$ is the Helly number when the sets are required to intersect at points in $S$; we will call this the \emph{$S$-Helly number}. We stress again that a set $S$ and its convex sets $ \K_S=\bigsetcond{S \cap K}{K \subset \real^d \ \text{is convex}}$ form a convexity space. The key problem we study here is: given a set $S$, bound $h(S)$.

For instance, note that when $S$ is finite then the bound $h(S)\le\#(S)$ is trivial. The original Helly number is $h(\R^d)=d+1$ and, interestingly, if $\F$ is any subfield of $\R$ (e.g., $\Q(\sqrt{2})$), then Radon's proof of Helly's theorem directly shows that the $S$-Helly number of $S=\F^d$ is still $d+1$. We know $h(\Z^{d-k}\times\R^k)=2^{d-k}(k+1)$. However, if $S$ is not necessarily a lattice but a general additive subgroup (e.g., $S=\{(\alpha \pi + \beta, \gamma) \in \R^2: \alpha,\beta,\gamma \in \Z\}$), then the $S$-Helly number is not covered by prior results. 
It is known that the $S$-Helly number may be infinite in some situations.
In Section \ref{sec:tools}, we state some prior results which are useful in computing $S$-Helly numbers.

\vskip .3cm
\noindent Now we are ready to state our contributions:

\subsection*{Bounds on Helly numbers for low dimensions and for algebraic subsets of \texorpdfstring{$\R^d$}{Rd}}
 
We present new Helly-type theorems for large classes of sets $S \subset \R^d$. For simplicity we will always assume that the linear span of $S$ is the whole space $\R^d$.

We begin in Section \ref{sec:lowdim} with results and open questions in dimensions one and two.
For the case of  dimension $d=1$ it is easy to see that for any set $S$, the Helly number $h(S)$ exists and is at most two.
In Section \ref{sec:lowdim} we present the following two theorems:

\begin{theorem} \label{thm:dim2}
If $S$ is a dense subset of $\R^2$, then $h(S)\leq 4$. This result is sharp.
\end{theorem}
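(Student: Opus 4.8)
The plan is to prove $h(S)\le 4$ by a minimal-counterexample argument and then exhibit a dense $S$ attaining the value $4$. For the upper bound, suppose $h(S)>4$: then there is a finite family $F_1=S\cap C_1,\dots,F_m=S\cap C_m$, with each $C_i\subset\R^2$ convex, such that every four of the $F_i$ intersect but $\bigcap_{i=1}^m F_i=\emptyset$; pick such a family with $m$ as small as possible. Since the Helly condition \eqref{helly:cond} permits repeated indices, for $m\le 4$ the hypothesis applied to four indices covering $[m]$ already gives $\bigcap_i F_i\ne\emptyset$, so $m\ge 5$. By minimality $\bigcap_{i\ne k}F_i\ne\emptyset$ for each $k$ (otherwise $\{F_i:i\ne k\}$ would be a smaller counterexample), so I can choose $p_k\in S\cap\bigcap_{i\ne k}C_i$; moreover $p_k\notin C_k$, since $p_k\in C_k$ would put $p_k$ in $\bigcap_i F_i=\emptyset$. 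Thus each $C_i$ is a convex set containing $\{p_j:j\ne i\}$ but avoiding $p_i$, whence $p_i\notin\conv\{p_j:j\ne i\}$ for every $i$; that is, $p_1,\dots,p_m$ lie in strictly convex position, and $\bigcap_{i=1}^m C_i\supseteq R:=\bigcap_{i=1}^m\conv\{p_j:j\ne i\}$.

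The key point is the planar fact that \emph{$m\ge 5$ points in strictly convex position in $\R^2$ have $\inte(R)\ne\emptyset$}. For $m\ge 6$ I would apply Helly's theorem in the plane to the open convex sets $\inte\conv\{p_j:j\ne i\}$: any three of them contain $\inte\conv\{p_j:j\notin\{i,i',i''\}\}$, which is nonempty because $m-3\ge 3$ points in strictly convex position span a non-degenerate triangle; hence all $m$ of these open sets have a common point, lying in $\inte(R)$. The remaining case $m=5$ I would check directly: $R$ is the region of the pentagon $p_1\cdots p_5$ left after cutting along each line through a pair $\{p_{i-1},p_{i+1}\}$, i.e.\ the inner pentagon of the associated pentagram, and this is non-degenerate since in a strictly convex pentagon no three of the five diagonals are concurrent, so the five crossing points are distinct. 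Granting this, density of $S$ yields a point of $S$ in $\inte(R)\subseteq\bigcap_i C_i$, contradicting $\bigcap_i F_i=\emptyset$. Hence $h(S)\le 4$.

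For sharpness I would take the dense set $S=\R^2\setminus\bigl([0,1]\times\{0\}\bigr)$ and the four closed halfplanes $C_1=\{y\ge 0\}$, $C_2=\{y\le 0\}$, $C_3=\{x\ge 0\}$, $C_4=\{x\le 1\}$. Their common intersection is the segment $[0,1]\times\{0\}$, which is disjoint from $S$, while every three of the $C_i$ meet $S$: $C_1\cap C_2\cap C_3$ and $C_1\cap C_2\cap C_4$ are the rays $[0,\infty)\times\{0\}$ and $(-\infty,1]\times\{0\}$, which contain the points $(2,0)$ and $(-1,0)$ of $S$, and the two triples containing both $C_3$ and $C_4$ are two-dimensional and so meet the dense set $S$. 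Thus the Helly condition fails with $h=3$, giving $h(S)\ge 4$, and together with the upper bound $h(S)=4$.

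I expect the main obstacle to be the geometric core — verifying carefully that the inner region $R$ of $m\ge 5$ points in convex position is genuinely two-dimensional, in particular ruling out degeneracies in the $m=5$ pentagon case — and, secondarily, keeping the minimal-counterexample bookkeeping precise (that a minimal counterexample has size at least $5$ and that each of its $(m-1)$-subfamilies has nonempty common $S$-intersection).
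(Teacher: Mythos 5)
Your proof is correct and follows essentially the same route as the paper: both reduce the upper bound to the fact that $\ge 5$ points of $S$ in strictly convex position force the inner region $\bigcap_i\conv(R\setminus\{x_i\})$ to have non-empty interior, which by density must meet $S$; you merely inline a minimal-counterexample derivation of this witness configuration instead of invoking Lemma \ref{lem:hof}, treat the cases $m\ge 6$ (via planar Helly) and $m=5$ (via the pentagram) separately, and give a different but equally valid sharpness example. The one step you leave informal --- non-degeneracy of the inner pentagon for $m=5$ --- is precisely the assertion the paper itself makes without proof, so there is no gap relative to the published argument.
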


\begin{theorem}\label{thm:lattice}
Let $L$ be a proper 
sublattice of $\Z^2$. If $S=\Z^2\setminus L$, then $h(S)\leq 6$. This result is sharp.
\end{theorem}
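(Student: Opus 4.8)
The plan is to use the standard reduction of $S$-Helly numbers to \emph{convex-position configurations} (the kind of statement recorded among the tools in Section~\ref{sec:tools}): $h(S)$ equals the largest $m$ for which there are $p_1,\dots,p_m\in S$ in convex position whose \emph{core} $\kappa:=\bigcap_{i=1}^m\conv\{p_j:j\ne i\}$ misses $S$. Indeed, for such a configuration the sets $F_i:=\conv\{p_j:j\ne i\}$ form a minimally $S$-infeasible family (each $F_j$ with $j\ne i$ contains $p_i\in S$, while $\bigcap_iF_i=\kappa$ avoids $S$), and conversely every minimally $S$-infeasible family can be shrunk to one of this form. So it suffices to prove: if $L$ is a proper (hence finite-index) sublattice of $\Z^2$ and $p_1,\dots,p_m\in\Z^2\setminus L$ are in convex position with $m\ge 7$, then $\kappa$ meets $\Z^2\setminus L$.

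\textbf{Reductions.} Put $P=\conv\{p_1,\dots,p_m\}$; relabelling cyclically, $\conv\{p_j:j\ne i\}$ is $P$ with the ear at $p_i$ cut off along the chord $p_{i-1}p_{i+1}$, so $\kappa=P\cap\bigcap_iH_i$ with $H_i$ the closed half-plane bounded by that chord and not containing $p_i$; in particular $\kappa$ is a nonempty convex polygon inside $P$. If $\kappa\cap\Z^2=\emptyset$ then $\{F_i\}$ is minimally $\Z^2$-infeasible, so Doignon's theorem $h(\Z^2)=4$ forces $m\le4$, a contradiction. Hence $\kappa\cap\Z^2\ne\emptyset$, and if $\kappa$ avoided $\Z^2\setminus L$ we would have $\emptyset\ne\kappa\cap\Z^2\subseteq L$. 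Applying a $GL_2(\Z)$ change of coordinates (which fixes $\Z^2$ and preserves Helly numbers) I may assume $L=a\Z\times b\Z$ is in Smith normal form, $1\le a\mid b$, $ab=[\Z^2:L]\ge2$.

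\textbf{The core of the matter.} Suppose first $a\ge 2$. Then every nonzero vector of $L$ has both coordinates divisible by $a$, so $L$ contains no primitive vector, and therefore $\kappa$ contains \emph{at most one} lattice point: two distinct lattice points $q,q'$ of $\kappa$ would force the segment $[q,q']\subseteq\kappa$ to contain $q+v$, where $v$ is the primitive vector along $q'-q$, and $q+v\in\Z^2\setminus L$ since $q\in L$ but $v\notin L$, contradicting $\kappa\cap(\Z^2\setminus L)=\emptyset$. So everything reduces to the purely geometric statement that a convex lattice polygon with at least $7$ vertices has at least two lattice points in its core; I would prove this with Pick's formula applied to $P$ and the ear triangles $\conv\{p_{i-1},p_i,p_{i+1}\}$, using that the ears removed in passing from $P$ to $\kappa$ are localized near the vertices and that the minimal area of a convex lattice heptagon is already large enough to force two lattice points to survive inside $\kappa$. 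If instead $a=1$, then $L=\Z\times b\Z$ and $S=\{(x,y)\in\Z^2:b\nmid y\}$; now all lattice points of $\kappa$ lie on horizontal lines $y\in b\Z$, and since between consecutive such lines there are $b-1\ge1$ forbidden lines carrying no lattice point of $\kappa$, concavity of the horizontal width function of the convex set $\kappa$ makes $\kappa$ thin in $x$ away from its $y$-extremes, so that an analogous (if more delicate) Pick/ear bookkeeping — or a fibration over the $y$-axis combining Doignon in the fibers with the one-dimensional bound $h\le2$ — again caps the number of vertices of $P$ at $6$. This quantitative step, that seven vertices already force the core to be too big to avoid $\Z^2\setminus L$, is the crux of the whole argument and the source of the constant $6$; the preceding reductions are routine.

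\textbf{Sharpness.} Take $L=2\Z^2$ and the six points $p_1=(1,0)$, $p_2=(1,1)$, $p_3=(0,1)$, $p_4=(-1,0)$, $p_5=(-1,-1)$, $p_6=(0,-1)$; these are in convex position and lie in $\Z^2\setminus2\Z^2$. A direct check shows the chords $p_{i-1}p_{i+1}$ are the six lines $x+y=\pm1$, $2x-y=\pm1$, $x-2y=\pm1$, so $\kappa=\{\,|x+y|\le1,\ |2x-y|\le1,\ |x-2y|\le1\,\}$, a centrally symmetric hexagon contained in $[-\tfrac{2}{3},\tfrac{2}{3}]^2$ whose only lattice point is $(0,0)\in2\Z^2$. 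Hence $\kappa$ misses $\Z^2\setminus2\Z^2$, the six sets $\conv\{p_j:j\ne i\}$ are minimally $(\Z^2\setminus2\Z^2)$-infeasible, and $h(\Z^2\setminus2\Z^2)\ge6$; combined with the upper bound this gives $h(\Z^2\setminus2\Z^2)=6$, so the constant in the theorem cannot be lowered.
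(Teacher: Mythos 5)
Your skeleton is sound and your sharpness construction is correct and complete (the hexagon $\{\abs{x+y}\le 1,\ \abs{2x-y}\le 1,\ \abs{x-2y}\le 1\}$ indeed contains only the origin among lattice points, so Lemma \ref{lem:hof} gives $h(\Z^2\setminus 2\Z^2)\ge 6$ --- in fact the paper's proof does not spell out a sharpness example, so this is a genuine addition). The reduction via Hoffman's lemma, the Doignon step showing $\kappa\cap\Z^2\ne\emptyset$, and the observation that for $a\ge 2$ the core can contain at most one lattice point (since $a\Z\times b\Z$ with $a\ge2$ contains no primitive vector) are all correct. But the argument stops exactly where the theorem lives: the claim that a convex lattice polygon with at least $7$ vertices must have at least two lattice points in its core, and the entire case $a=1$, are announced (``I would prove this with Pick's formula\dots'', ``an analogous (if more delicate) Pick/ear bookkeeping\dots'') rather than proved. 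You say yourself that this quantitative step is ``the crux of the whole argument and the source of the constant $6$''; as written it is a conjecture, not a proof, and the $a=1$ sketch in particular (``concavity of the horizontal width function\dots makes $\kappa$ thin in $x$'') does not obviously close.

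For comparison, the paper closes this gap differently. It works with $S$-face-polygons (Lemma \ref{lem:hollow}) and first invokes the quantitative Doignon theorem, Lemma \ref{kpointslemma}, to rule out $\abs{\inte(K)\cap\Z^2}\le 2$ when there are more than $6$ faces --- this is exactly the quantitative fact you are trying to rederive from Pick's formula, and you could instead cite it: if your core had at most one lattice point, Lemma \ref{kpointslemma} would produce $6$ of your sets whose intersection has exactly $k\le 1$ lattice points, yet that intersection also contains some vertex $p_j$ with $j$ outside the chosen indices, a contradiction when $m\ge 7$. The paper then splits not on the Smith normal form of $L$ but on whether $K\cap L$ contains three non-collinear points (killed by comparing the area $d(L)/2$ of an empty $L$-triangle with the area $1/2$ of an empty $\Z^2$-triangle) or is collinear (killed by trapping $K$ in the strip $\abs{y}<2$ and using that a convex curve meets each of three parallel lines in at most two points). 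Either adopt that route for the quantitative steps or supply the Pick computations in full; as it stands the upper bound is not established.
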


In Section \ref{sec:generaldim} we explore the situation where $S$ is algebraically structure set.
First any additive subgroup of $\R^d$ (not necessarily closed) and we propose an interesting conjecture for general subgroups. 
If $S$ has additional structure we can indeed obtain better bounds for $h(S)$.

\begin{theorem}\label{thm:modules}
Let $G$ be a dense subgroup of $\R$. If $S\subset \R^d$ is a $G$-module then $h(S)\le 2d$.
\end{theorem}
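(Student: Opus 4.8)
The plan is to reduce the statement to a purely combinatorial fact about finite configurations of points of $S$ and then to exploit the strong density that the hypotheses force on $S$. The first thing I would record is that $S$ is \emph{hereditarily dense}: since $G$ is dense in $\R$ and $S$ is a $G$-module whose $\R$-span is $\R^d$, for any finite $X\subseteq S$ the set $S$ contains (a translate by a point of $X$ of) the $G$-span of $X$, which is dense in $\aff(X)$. Consequently, for every affine flat $A$ that is spanned by points of $S$, the set $S\cap A$ is dense in $A$ and is itself a translate of a dense $G$-module of $\R^{\dim A}$; this is the only way the module structure will be used, and it is what makes an induction on $d$ possible.

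Next I would apply the standard reduction (which should be available from the tools of Section~\ref{sec:tools}): starting from a family $K_1\cap S,\dots,K_m\cap S$ that is minimally $S$-infeasible, choose $p_j\in\bigcap_{i\neq j}(K_i\cap S)$; then $p_j\notin K_j$, hence $p_j\notin\conv\{p_i:i\neq j\}$, so the $p_j$ are in convex position, and replacing each $K_j$ by $\conv\{p_i:i\neq j\}\subseteq K_j$ preserves minimal $S$-infeasibility. Thus it suffices to prove: if $p_1,\dots,p_m\in S$ are in convex position and $R:=\bigcap_{j=1}^m\conv\{p_i:i\neq j\}$ satisfies $R\cap S=\emptyset$, then $m\le 2d$. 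If the $p_i$ span only a proper flat $A$, then $A$ is spanned by points of $S$, all the hulls lie in $A$, and the very same configuration witnesses $h(S\cap A)\ge m$; by the induction hypothesis $m\le 2\dim A<2d$. So we may assume the $p_i$ affinely span $\R^d$.

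In this spanning case, $R$ cannot have nonempty interior, since density of $S$ would then put a point of $S$ in $R$; and if $R=\emptyset$ the hulls $\conv\{p_i:i\neq j\}$ form an ordinary Helly-infeasible family, so by Helly's theorem and minimality $m\le d+1\le 2d$. Hence we may assume $\emptyset\neq R$ and $R\subseteq\Pi$ for a hyperplane $\Pi\supseteq\aff(R)$. The key geometric input is that every \emph{Radon point} — a point of $\conv\{p_i:i\in B_1\}\cap\conv\{p_i:i\in B_2\}$ for any partition $B_1\sqcup B_2$ of any subset of $\{1,\dots,m\}$ — lies in $R$, and therefore in $\Pi$. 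Fixing $\Pi$ generically through $\aff(R)$, write $[m]=I_-\sqcup I_0\sqcup I_+$ according as $p_i$ is strictly below $\Pi$, on $\aff(R)$, or strictly above $\Pi$; one checks $I_-,I_+\neq\emptyset$. If $\abs{I_-}\ge d+1$, pick $d+1$ points from $I_-$ together with one point from $I_+$; a Radon partition of these $d+2$ points has one block entirely inside the open halfspace below $\Pi$, so its Radon point lies strictly below $\Pi$, contradicting its membership in $\Pi$. Hence $\abs{I_-}\le d$, and symmetrically $\abs{I_+}\le d$. The remaining points, those of $I_0$, lie on the ``special'' flat $\aff(R)$; because $R$ is full-dimensional in $\aff(R)$ and disjoint from $S$, hereditary density forces these points to span a flat of dimension at most $\dim R-1$, which one feeds into a secondary induction on $\dim R$ (the cross-polytope configuration $\{q\pm a_ke_k\}$ shows that dropping one dimension of $R$ costs exactly two ``transverse'' points), to conclude $\abs{I_0}+\abs{I_-}+\abs{I_+}\le 2d$.

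I expect this last step to be the main obstacle: organizing the secondary induction on $\dim R$, keeping track of the points on $\aff(R)$, and in particular handling the degenerate subcase $S\cap\aff(R)=\emptyset$ (an ``irrational'' flat missed entirely by $S$), where the restriction argument of the preceding paragraph does not directly apply and one must instead argue with the two halfspaces bounded by $\aff(R)$. Everything preceding that — the density reduction, the passage to a primitive configuration, the trichotomy, and the Radon-point bound $\abs{I_\pm}\le d$ — should be routine.
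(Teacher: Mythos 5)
Your setup is sound and close in spirit to what is needed: the hereditary density of a $G$-module on every flat it spans, the reduction via Hoffman's lemma (Lemma \ref{lem:hof}) to a critical configuration $p_1,\dots,p_m\in S$ with $R=\bigcap_j\conv\{p_i:i\neq j\}$ missing $S$, the observation that density kills $\inte(R)$, and the fact that every Radon point of every sub-partition lies in $R$ (hence in $\aff(R)$), which correctly yields $\abs{I_-}\le d$ and $\abs{I_+}\le d$. But the proof has a genuine gap exactly where you flag it: those two bounds only give $m\le 2d+\abs{I_0}$, which is useless once $I_0\neq\emptyset$, and nothing in the proposal establishes the joint bound you actually need, namely $\abs{I_-}+\abs{I_+}\le 2(d-\dim\aff(R))$ together with $\abs{I_0}\le 2\dim\aff(R)$. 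The ``secondary induction on $\dim R$'' is never set up as a precise statement; the observation that the points of $I_0$ span a flat of dimension at most $\dim R-1$ (which is correct, by hereditary density plus $\relintr R\neq\emptyset$ in $\aff(R)$) bounds a dimension, not a cardinality; and the degenerate case $S\cap\aff(R)=\emptyset$ is explicitly left unresolved. As written, the argument does not close.

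The paper's proof does the accounting you are missing, and does it at the level of the convex sets rather than the points. It inducts on $d$: density forces $m:=\dim(\bigcap\F)<d$; a Steinitz/Gr\"unbaum-type result produces a subfamily $\cH$ of only $2(d-m)$ sets with $\dim(\bigcap\cH)=m$; since $2(d-m)<h$ there is a point $s\in(\bigcap\cH)\cap S$, and after translating by $s$ the set $\spa(\bigcap\cH)\cap S$ is again a $G$-module in an $m$-dimensional subspace, so the inductive hypothesis gives Helly number at most $2m$ there; counting then yields $h\le 2m+2(d-m)=2d$. The quantity $2(d-m)$ is precisely the sharp replacement for your two separate bounds $\abs{I_\pm}\le d$, and the restriction of the problem to $\spa(\bigcap\cH)$ is the clean form of your $I_0$ induction. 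If you want to salvage your point-configuration route, you would need a Radon-type argument giving the transverse bound $2(d-\dim\aff(R))$ in one stroke (essentially re-proving the Steinitz step) and then a correct restriction of the critical configuration to $\aff(R)$; at that point you will have reconstructed the paper's argument in dual form.
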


We then extend the classical Doignon's theorem for a lattice by proving that the set difference of a lattice and the union of several 
of its sublattices has bounded Helly number too. We prove the following theorem which has applications in convex discrete optimization (see \cite{chanceopt2015}):

\begin{theorem} \label{thm:difflat}
Let $L_1,\dots,L_k$ be (possibly translated) sublattices of $\Z^d$. Then the set $S =\Z^d \setminus (L_1\cup\dots\cup L_k)$ has Helly number $h(S)\le C_k2^d$ for some constant $C_k$ depending only on $k$.
\end{theorem}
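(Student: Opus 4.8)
The plan is to combine Doignon's theorem for $\Z^d$ with an inclusion–exclusion argument over the sublattices, using the general principle that Helly numbers behave well under finite unions of convexity spaces. First I would recall (from Section \ref{sec:tools}, where prior tools are collected) the standard fact that if a set $S$ can be written as $S = S_1 \cup \cdots \cup S_n$ where each $S_j$ has $S$-Helly number at most $h_j$, then $h(S) \le \sum_{j=1}^n h_j$ — more precisely one uses that a family of convex sets fails to intersect in $S$ only if it fails to intersect in some $S_j$, and then applies the Helly number of each piece. So the strategy reduces to covering $\Z^d \setminus (L_1 \cup \dots \cup L_k)$ by a bounded number (bounded in terms of $k$) of pieces, each of which is either a lattice, a translate of a lattice, or a bounded finite set, and each of which therefore has Helly number at most $2^d$ by Doignon's theorem (finite pieces contribute a constant).

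The key step is the combinatorial/number-theoretic lemma that $\Z^d \setminus (L_1 \cup \dots \cup L_k)$ is a finite union of translated sublattices of $\Z^d$, with the number of pieces bounded by a function of $k$ alone (crucially independent of $d$ and of the indices $[\Z^d : L_i]$). I would prove this by induction on $k$. For $k=1$: if $L_1$ is an honest sublattice of index $m$, then $\Z^d \setminus L_1$ is the union of the $m-1$ nontrivial cosets of $L_1$ — but that count depends on $m$, so this naive approach fails and one must be smarter. The fix is to not insist the pieces be cosets of the $L_i$ themselves: instead, pick a basis adapted to $L_1$ (Smith normal form / elementary divisors), so that $\Z^d / L_1 \cong \Z/a_1 \times \cdots \times \Z/a_d$, and observe that the complement of a single point in a product of cyclic groups is a union of at most $d$ "hyperplane-type" cosets — no wait, that is still $d$, hence dimension-dependent. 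The genuinely correct observation is: the complement of a coset of a finite-index sublattice, viewed inside $\Z^d$, is a union of translates of sublattices whose number can be taken to be a constant, because $\Z/a\Z \setminus \{0\}$ with $a$ not prime still decomposes using the subgroups of $\Z/a\Z$, and one only needs $O(1)$ of them after regrouping — this is exactly where the constant $C_k$ comes from and where the real work lies.

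The main obstacle, then, is controlling the number of translated-sublattice pieces so that it depends only on $k$ and not on $d$ or the indices. I expect the right way to handle this is: reduce modulo the least common multiple $N$ of the indices $[\Z^d:L_i]$ (or rather work in $(\Z/N\Z)^d$), where the $L_i$ become subgroups; the complement of $\bigcup \bar L_i$ in $(\Z/N\Z)^d$ lifts back to a union of $\Z^d$-cosets of the sublattice $N\Z^d$, but there are $N^d$ of them, which is far too many. So one cannot afford to atomize down to $N\Z^d$. Instead one keeps the pieces as large as possible: I would argue that $\Z^d \setminus (L_1 \cup \dots \cup L_k)$ is a union of at most $f(k)$ sets of the form $(\text{coset}) \cap (\Z^d \setminus L_{i_1}) \cap \dots$, and show by a clean induction — removing one $L_i$ at a time, each removal multiplying the piece count by a bounded factor — that $f(k)$ exists. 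Handling translated (non-subgroup) sublattices is a minor addition: a translate $v + L_i$ either misses $\Z^d$ entirely in the relevant coset or is itself a coset, so it causes no extra difficulty. Once the covering by $C_k$ translated sublattices (plus possibly a bounded finite set) is established, applying Doignon's theorem to each piece and summing gives $h(S) \le C_k 2^d$, completing the proof.
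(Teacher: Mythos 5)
Your overall strategy---decompose $S=\Z^d\setminus(L_1\cup\dots\cup L_k)$ into boundedly many translated sublattices, apply Doignon's theorem to each piece, and sum via the subadditivity $h(S_1\cup S_2)\le h(S_1)+h(S_2)$---founders on the key lemma, which is false. Already for $d=1$, $k=1$, $L_1=p\Z$ with $p$ prime, any translated sublattice $a+b\Z$ contained in $\Z\setminus p\Z$ must have $p\mid b$ (otherwise $a+b\Z$ meets every residue class mod $p$, including $0$), hence is contained in a single nonzero residue class mod $p$; since all $p-1$ such classes must be covered, at least $p-1$ pieces are needed. So the number of translated-sublattice pieces cannot be bounded by a function of $k$ alone---it necessarily grows with the indices $[\Z^d:L_i]$. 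You correctly sensed this obstacle twice in your write-up, but the proposed fix (``one only needs $O(1)$ of them after regrouping'') is precisely the step that cannot be carried out, and no amount of regrouping, Smith normal form, or reduction mod the lcm of the indices will repair it. Allowing more general pieces than translated sublattices does not help either, since you would then need to bound their Helly numbers, which is the original problem.

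The paper's proof avoids any decomposition of $S$. By Lemma \ref{lem:hollow} it suffices to bound the number of vertices of an $S$-vertex-polytope $K$. If $K$ has more than $(R_k-1)2^d$ vertices, where $R_k=R(3,\dots,3)$ is the $k$-color Ramsey number, then by pigeonhole on the $2^d$ parity classes of $\Z^d$ some $R_k$ vertices share a parity, so all their pairwise midpoints lie in $\Z^d$ but not in $S$ (by the defining property of an $S$-vertex-polytope), hence each midpoint lies in some $L_i$. Coloring each edge by such an index $i$ and applying Ramsey's theorem yields a monochromatic triangle $A_1,A_2,A_3$ with midpoints $M_1,M_2,M_3$ in a single $L_i$; the affine identity $A_1=M_1-M_2+M_3$ then forces $A_1\in L_i$, contradicting $A_1\in S$. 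This gives $C_k=R_k-1$ directly and is the mechanism by which the constant genuinely depends only on $k$---a mechanism your approach has no substitute for.
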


Note that when no sublattices are removed this is exactly Doignon's theorem.
A quantitative version of Theorem \ref{thm:difflat}, with different bounds, was obtained in \cite{de2015quantitative}.
 

\subsection*{Finding Colorful versions}

Many Helly-type theorems admit a colorful version. In this setting, each convex set is 
assigned one of $N$ colors and every $N$ convex sets of distinct colors are required to intersect. 
The conclusion is that there is a color for which all convex sets intersect.
In Section \ref{sec:color} we analyze a popular method due to Lov\'asz that yields colorful versions of several Helly-type theorems.
As a result we find a few new colorful Helly-type theorems.

\section{Tools to compute Helly numbers}\label{sec:tools}

We will be concerned with proving bounds on $S$-Helly numbers of specific $S\subseteq\R^d$.
In this section we discuss some useful methods priorly  used to bound the $S$-Helly number of a subset $S$ of $\R^d$. 
They were studied first by Hoffman \cite{hoffman} in the setting of abstract convexity, and later on refined by 
Averkov and Weismantel in \cite{AW2012}, and Averkov in \cite{Ave2013}.

\begin{definition}
We define an \emph{$S$-vertex-polytope} as the convex hull of points $x_1,x_2,\dots,x_k\in S$ in convex position such that no other point of $S$ is in $\conv(x_1,\dots,x_k)$. 

Similarly, an \emph{$S$-face-polytope} is defined as the intersection of semi-spaces $H_1,H_2,\dots,H_k$ such that $\bigcap_i H_i$ has $k$ faces and contains exactly $k$ points of $S$, one contained in the relative interior of each face. 
\end{definition}
Figure \ref{fig:Spoly} shows an $S$-vertex-polytope with $6$ vertices and an $S$-face-polytope with six sides in $\R^2$.

\begin{figure}
\includegraphics{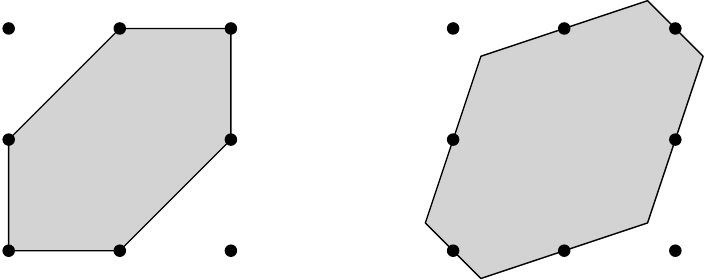}
\caption{An $S$-vertex-polytope and an $S$-face-polytope.}
\label{fig:Spoly}
\end{figure}

\begin{lemma}[Theorem 2.1 in \cite{Ave2013} and Proposition 3 in \cite{hoffman}]\label{lem:hollow}
Assume $S\subset\R^d$ is discrete, then $h(S)$ is equal to the following two numbers:
\begin{enumerate}
\item The supremum $f(S)$ of the number of faces of an $S$-face-polytope.
\item The supremum $g(S)$ of the number of vertices of an $S$-vertex-polytope.
\end{enumerate}
\end{lemma}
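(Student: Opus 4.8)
The plan is to show that $h(S)$, $f(S)$, and $g(S)$ coincide by establishing the chain of inequalities $h(S) \le f(S) \le g(S) \le h(S)$, or some convenient permutation thereof; since $S$ is discrete, every convex hull of finitely many points of $S$ contains only finitely many points of $S$, which keeps all the extremal objects finite and well-defined. For the inequality $g(S) \le h(S)$, I would start with an $S$-vertex-polytope $P = \conv(x_1,\dots,x_k)$ containing no point of $S$ other than its vertices. For each vertex $x_i$, the set $P \setminus \{x_i\}$ is still convex, and intersecting it with $S$ gives a member $F_i = S \cap (P \setminus \{x_i\})$ of the family $\K_S$. Any $k-1$ of the $F_i$'s share the omitted vertex (that vertex lies in all the others), so they intersect in $S$; but $\bigcap_i F_i = \emptyset$ since the only candidate points are the $x_i$'s and each is excluded from exactly one $F_i$. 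Hence $h(S) \ge k$, giving $h(S) \ge g(S)$.

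For $f(S) \le h(S)$, I would run the dual construction: given an $S$-face-polytope $Q = \bigcap_{i=1}^k H_i$ with exactly $k$ facets and exactly $k$ points $y_1,\dots,y_k$ of $S$, one in the relative interior of each facet $F_i \subset \partial H_i$, take $G_i = S \cap \bigcap_{j \ne i} H_j$. Removing the halfspace $H_i$ enlarges $Q$ just enough to pick up $y_i$ (which sits on the bounding hyperplane of $H_i$ and strictly inside all the others), so $y_i \in G_j$ for every $j \ne i$, i.e. any $k-1$ of the $G_i$ intersect in $S$. On the other hand $\bigcap_i G_i \cap S = S \cap Q$ contains no point of $S$ in the interior and the boundary points have been excluded, so it is empty — here one must be slightly careful and either choose the $H_i$ closed and argue that $S \cap Q = \{y_1,\dots,y_k\}$, or shrink $Q$ infinitesimally using discreteness of $S$; this bookkeeping is the first mildly delicate point. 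This yields $h(S) \ge f(S)$.

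The substantive direction is $h(S) \le f(S)$ (equivalently $h(S) \le g(S)$), and this is where I expect the main obstacle. Suppose $F_1,\dots,F_m \in \K_S$ have empty common intersection while every $h$ of them intersect in $S$; I want to produce an $S$-face-polytope with at least $h+1$ facets, forcing $f(S) \ge h+1 > h$ and hence, taking $h = f(S)$, a contradiction. The idea is to take a minimal such family (discard redundant $F_i$), so that removing any one set creates a nonempty intersection; pick a witness point $p_i \in S$ in $\bigcap_{j \ne i} F_j$ for each $i$. Writing each $F_i = S \cap K_i$ with $K_i$ convex, one separates $p_i$ from the convex set $\bigcap_{j} K_j$ (which misses $p_i$) or, better, works locally: intersect everything with a large ball and use a compactness/separation argument to replace each $K_i$ by a halfspace $H_i$ through a carefully chosen supporting hyperplane so that the $p_i$ survive as the relative-interior witnesses on the facets. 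The delicate parts are (a) ensuring the resulting polytope $\bigcap H_i$ is bounded and has exactly the right number of facets each carrying exactly one point of $S$ in its relative interior, which may require a perturbation of the hyperplanes exploiting discreteness of $S$ to push stray points of $S$ off the boundary and into the exterior, and (b) handling the case where the natural separating hyperplanes are not in ``general position'' with respect to $S$. I would cite the constructions of Hoffman \cite{hoffman} and Averkov \cite{Ave2013} for the clean version of this perturbation argument, and present the reduction to $S$-face-polytopes as the conceptual core, deferring the genericity technicalities. The equality $f(S) = g(S)$ then follows either by a direct polarity/duality argument between $S$-face-polytopes and $S$-vertex-polytopes, or simply by combining $h(S) \le g(S) \le h(S)$ with the already-established $h(S) = f(S)$.
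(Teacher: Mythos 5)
First, a point of comparison: the paper does not prove this lemma at all --- it is imported verbatim from Averkov (Theorem 2.1 in \cite{Ave2013}) and Hoffman (Proposition 3 in \cite{hoffman}), so there is no in-paper argument to measure you against. Judged on its own, your proposal correctly handles the two lower bounds. The argument $g(S)\le h(S)$ via the sets $F_i=S\cap(P\setminus\{x_i\})$ is fine (removing an extreme point preserves convexity, and the $k$ sets pairwise-minus-one intersect while the full intersection is empty). The face-polytope direction has a genuine slip as written: with $G_i=S\cap\bigcap_{j\ne i}H_j$ you get $\bigcap_i G_i=S\cap Q=\{y_1,\dots,y_k\}\neq\emptyset$, so the boundary points are \emph{not} excluded. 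The clean fix is to take the $k$ convex sets to be the open halfspaces, $F_i=S\cap\inte(H_i)$: then $\bigcap_{j\ne i}F_j\ni y_i$ while $\bigcap_j F_j=S\cap\inte(Q)=\emptyset$. You flag the issue, but the repair you sketch (shrinking $Q$) is the wrong one; no shrinking is needed.

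The real gap is the upper bound $h(S)\le f(S)$ (equivalently $h(S)\le g(S)$), which is the entire content of the lemma and which you explicitly defer to the very references the lemma is attributed to. Your sketch --- minimal critical family, witnesses $p_i\in S\cap\bigcap_{j\ne i}F_j$, separation of $p_i$ from $K_i$, perturbation to general position --- is the right skeleton (it is essentially how one derives the paper's Lemma \ref{lem:hof} and then upgrades it), but the step you wave at in (a)/(b) is exactly where the theorem lives: given a Hoffman configuration $R$ with $\bigcap_i\conv(R\setminus\{x_i\})\cap S=\emptyset$, the hull $\conv(R)$ may contain many other points of $S$, and one must replace $R$ by a configuration of the same cardinality whose hull meets $S$ only in its vertices (e.g., by choosing $R$ to minimize the number of points of $S$ in $\conv(R)$, or its volume, and exchanging vertices). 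This is precisely where discreteness of $S$ is indispensable --- the paper itself records that for non-discrete $S$ only $f(S)\le h(S)$ survives, and cites an example with $f(S)=1$ but $h(S)=\infty$ --- yet your sketch never identifies where discreteness enters. As a standalone proof the proposal therefore establishes only $\max\{f(S),g(S)\}\le h(S)$ (after the fix above) and cites away the converse.
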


%
%

The case when $S$ is not discrete is a bit more delicate. Averkov \cite{Ave2013}  showed that in general $f(S) \leq h(S)$.
Recently, new results involving these two quantities and the $S$-Helly number $h(S)$ have been obtained by Conforti and Di Summa \cite{conforti-unp, Ave2013}. E.g., they proved that when $S$ is a closed subset and $f(S)$ is finite, then $h(S) \leq (d+1) f(S)$. They also showed an example where $f(S)=1$, yet the $S$-Helly number is infinite.   Fortunately, for the calculation of Helly numbers there is a very general result due to A.J. Hoffman characterizing $S$-Helly numbers.

\begin{lemma}[Proposition 2 in \cite{hoffman}]\label{lem:hof}
If $S\subset\R^d$, then $h(S)$ can be computed as the supremum of $h$ such that the following holds:
There exists a set $R=\{x_1,\dots,x_h\}\subset S$ such that $\bigcap_i\conv(R\setminus\{x_i\})$ does not intersect $S$.
\end{lemma}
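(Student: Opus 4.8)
The plan is to prove the equality by sandwiching $h(S)$ between two copies of the right-hand quantity, which I abbreviate $h^\ast := \sup\{\, h : \exists\, R=\{x_1,\dots,x_h\}\subset S \text{ with } \bigcap_i \conv(R\setminus\{x_i\})\cap S=\emptyset \,\}$ (a supremum of positive integers, hence attained if finite).

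First I would show $h(S)\ge h^\ast$. Given a witness $R=\{x_1,\dots,x_h\}$, form the family of $S$-convex sets $F_j := S\cap\conv(R\setminus\{x_j\})\in\K_S$ for $j=1,\dots,h$. Any subfamily of size $h-1$ omits exactly one index $j_0$, and then $x_{j_0}\in R\setminus\{x_j\}$ for every remaining $j$, so $x_{j_0}\in\bigcap_{j\ne j_0}F_j$; thus every $h-1$ of the $F_j$ meet $S$. On the other hand $\bigcap_{j=1}^h F_j = S\cap\bigcap_j\conv(R\setminus\{x_j\})=\emptyset$ by the choice of $R$. Hence \eqref{helly:cond} fails for the value $h-1$, which forces $h(S)\ge h$; taking the supremum over all witnesses $R$ gives $h(S)\ge h^\ast$, the case $h^\ast=\infty$ being immediate from the same reasoning applied to witnesses of arbitrarily large size.

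For the reverse inequality $h(S)\le h^\ast$, suppose \eqref{helly:cond} fails for some value $n$: there are $F_1,\dots,F_m\in\K_S$ with $\bigcap_{j\in I}F_j\ne\emptyset$ whenever $|I|=n$ but $\bigcap_{j=1}^m F_j=\emptyset$. I would pass to an inclusion-minimal infeasible subfamily, relabeled as $F_1,\dots,F_p$ with $\bigcap_{j=1}^p F_j=\emptyset$ yet $\bigcap_{j\ne i}F_j\ne\emptyset$ for every $i\le p$. Choosing $y_i\in\bigcap_{j\ne i}F_j$ (automatically $y_i\in S$ since each $F_j\subseteq S$), the points $y_1,\dots,y_p$ are distinct because $y_i\notin F_i$ while $y_{i'}\in F_i$ for $i'\ne i$, and $p\ge n+1$: any $p\le n$ of the $F_j$ would have a common point in $S$ by hypothesis, using repeated indices to reduce $n$-wise intersections to $p$-wise ones. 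Writing $F_j=S\cap K_j$ with $K_j$ convex, the containment $\{y_i:i\ne j\}\subseteq K_j$ yields $\conv(R\setminus\{y_j\})\subseteq K_j$ for $R:=\{y_1,\dots,y_p\}$, and therefore $\bigcap_j\conv(R\setminus\{y_j\})\cap S\subseteq \bigcap_j K_j\cap S=\bigcap_j F_j=\emptyset$. So $R$ is a witness of size $p\ge n+1$, giving $h^\ast\ge n+1$; since this holds for every $n$ at which \eqref{helly:cond} fails, $h^\ast\ge h(S)$. Combining the two inequalities proves $h(S)=h^\ast$.

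The delicate points I anticipate are all bookkeeping rather than conceptual: verifying that the witnesses $y_i$ are genuinely distinct so that $R$ has the claimed cardinality, correctly handling index repetitions and small degenerate cases in the definition \eqref{helly:cond}, and treating $h^\ast=\infty$ uniformly. The second inequality carries the real content, as it is essentially the ``minimal infeasible subfamily'' argument that underlies the LP-type formulations of Helly's theorem; the first is a direct unfolding of definitions.
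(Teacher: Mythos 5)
Your proof is correct: both directions are sound, and you handle the genuinely delicate points (distinctness of the $y_i$, the bound $p\ge n+1$ via repeated indices, and the case $h(S)=\infty$) properly. The paper states this lemma without proof, simply citing Hoffman's Proposition 2, and your minimal-infeasible-subfamily argument is exactly the standard proof of that result, so there is nothing to contrast.
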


Note that, in particular, the points of $R$ must be in strictly convex position. 
As a direct application of Lemma \ref{lem:hof} we have the following proposition.

\begin{prop}
If $S_1,S_2\subseteq\R^d$, then $h(S_1\cup S_2)\leq h(S_1)+h(S_2)$.
\end{prop}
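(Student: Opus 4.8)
The plan is to apply Lemma \ref{lem:hof} directly. Suppose $h(S_1\cup S_2) = h$; by Lemma \ref{lem:hof} there is a set $R = \{x_1,\dots,x_h\}\subseteq S_1\cup S_2$ in strictly convex position such that $\bigcap_{i=1}^h \conv(R\setminus\{x_i\})$ misses $S_1\cup S_2$. First I would partition $R$ according to membership: let $A = R\cap S_1$ (after removing any point also in $S_2$, assigning overlaps arbitrarily to one side) and $B = R\setminus A\subseteq S_2$, with $\#A = a$ and $\#B = b$, so $a + b = h$. The goal is to show $a \le h(S_1)$ and $b \le h(S_2)$, which gives $h = a+b \le h(S_1)+h(S_2)$ immediately.

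Next I would verify the witness condition for each piece separately. Consider $A = \{x_{j_1},\dots,x_{j_a}\}\subseteq S_1$; these points are in strictly convex position since $R$ is. I claim $\bigcap_{t=1}^a \conv(A\setminus\{x_{j_t}\})$ does not meet $S_1$. The key observation is monotonicity of the intersection: for each $i$, $\conv(R\setminus\{x_i\}) \subseteq \conv(R)$, and more usefully, $\conv(A\setminus\{x_{j_t}\}) \subseteq \conv(R\setminus\{x_{j_t}\})$ because $A\setminus\{x_{j_t}\}\subseteq R\setminus\{x_{j_t}\}$. Hence
\begin{equation*}
\bigcap_{t=1}^a \conv(A\setminus\{x_{j_t}\}) \;\subseteq\; \bigcap_{t=1}^a \conv(R\setminus\{x_{j_t}\}) \;\subseteq\; \bigcap_{i=1}^h \conv(R\setminus\{x_i\}),
\end{equation*}
and the right-hand side misses $S_1\cup S_2 \supseteq S_1$. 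So the intersection over $A$ misses $S_1$, and by Lemma \ref{lem:hof} this forces $a \le h(S_1)$. The identical argument with $B$ and $S_2$ gives $b \le h(S_2)$. Summing yields the bound.

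There is one edge case to handle: if $A$ or $B$ is empty, or a singleton, the conclusion is trivial since $h(S_i)\ge 1$ always (indeed $h(S_i)\ge 2$ whenever $S_i$ spans a line, and if $S_i=\emptyset$ there is nothing to prove). The main subtlety — really the only thing requiring care — is the inclusion $\bigcap_{t} \conv(A\setminus\{x_{j_t}\}) \subseteq \bigcap_{i}\conv(R\setminus\{x_i\})$: one must be careful that we are intersecting over the smaller index set $\{j_1,\dots,j_a\}$ on the left but can still embed it into the full intersection on the right precisely because each term $\conv(A\setminus\{x_{j_t}\})$ sits inside the corresponding term $\conv(R\setminus\{x_{j_t}\})$ of the full intersection, and the remaining terms of the full intersection only shrink the set further. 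I expect this set-theoretic bookkeeping to be the only place where a reader might want explicit justification; everything else is a direct invocation of Lemma \ref{lem:hof}.
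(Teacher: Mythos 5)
Your overall strategy is exactly the ``direct application of Lemma \ref{lem:hof}'' that the paper intends (the paper prints no proof), and the partition of the witness set $R$ into $A\subseteq S_1$ and $B\subseteq S_2$ is the right move. However, the second inclusion in your displayed chain is false as written: $\bigcap_{t=1}^a \conv(R\setminus\{x_{j_t}\})$ is an intersection over a \emph{smaller} index set than $\bigcap_{i=1}^h \conv(R\setminus\{x_i\})$, so it is a \emph{superset} of the full intersection, not a subset (take $R$ three non-collinear points and $A$ a singleton: the left side is a segment, the right side is empty). Your closing remark that ``the remaining terms of the full intersection only shrink the set further'' has the logic backwards --- those extra terms shrink the right-hand side, which is precisely the obstacle, not a help.

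The good news is that the end-to-end containment $\bigcap_{t=1}^a \conv(A\setminus\{x_{j_t}\}) \subseteq \bigcap_{i=1}^h \conv(R\setminus\{x_i\})$ is true; it just needs a different justification, handled index by index. For $i=j_t$ with $x_i\in A$, you already have $\conv(A\setminus\{x_{j_t}\})\subseteq\conv(R\setminus\{x_{j_t}\})$. For the remaining indices, i.e.\ those $i$ with $x_i\in B$, note that $A\subseteq R\setminus\{x_i\}$ since $x_i\notin A$, hence every term $\conv(A\setminus\{x_{j_t}\})\subseteq\conv(A)\subseteq\conv(R\setminus\{x_i\})$. So the left-hand intersection lies inside \emph{every} term $\conv(R\setminus\{x_i\})$, $i=1,\dots,h$, and therefore inside the full intersection, which misses $S_1\cup S_2\supseteq S_1$. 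With that repair, $A$ is a valid witness for $S_1$, giving $a\le h(S_1)$, and symmetrically $b\le h(S_2)$; the proposition follows. (Your handling of the empty/singleton cases and the implicit attainment of the supremum in Lemma \ref{lem:hof} are fine.)
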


The Helly number of a product is not always so well-behaved as in the mixed integer case, but Averkov and Weismantel  proved the following

\begin{prop}[{\cite[Thm. 1.1]{AW2012}}]
If $M$ is a closed subset of $\R^k$ then $h(\R^d\times M)\le (d+1)h(M)$.
\end{prop}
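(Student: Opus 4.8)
\emph{Sketch of the intended argument.} The plan is to apply the defining property of the Helly number directly: I would take a finite family of convex sets $K_i\subset\R^{d+k}$, $i\in I$, such that every $(d+1)h(M)$ of the sets $F_i:=K_i\cap(\R^d\times M)$ have a common point, and prove $\bigcap_{i\in I}F_i\neq\emptyset$; if $h(M)=\infty$ there is nothing to show. The overall idea is to peel off the two factors separately, handling the $\R^k$-factor with the $M$-Helly number and the $\R^d$-factor with ordinary Helly. Throughout, let $\pi\colon\R^{d+k}\to\R^k$ be the projection onto the last $k$ coordinates.

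I would first observe that the naive two-step attempt does not work: projecting each $K_i$, applying $h(M)$ to the $M$-convex family $\{\pi(K_i)\cap M\}_{i\in I}$ to get a common $y^{\ast}\in M$, and then intersecting the slices $\{x\in\R^d:(x,y^{\ast})\in K_i\}$ in $\R^d$ fails, because such a $y^{\ast}$ need not make every $d+1$ of these slices meet simultaneously. The remedy is to hand $h(M)$ a richer family that already records higher intersections: for every $\sigma\subseteq I$ with $\abs{\sigma}\le d+1$, set $D_\sigma:=\pi\bigl(\bigcap_{i\in\sigma}K_i\bigr)\cap M$. Each $D_\sigma$ is a convex subset of $\R^k$ intersected with $M$, hence a member of $\K_M$, and there are finitely many of them.

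The key step is then to check that every $h(M)$ of the sets $D_\sigma$ have a common point. Given $\sigma_1,\dots,\sigma_{h(M)}$, their union $J=\bigcup_t\sigma_t$ has at most $(d+1)h(M)$ elements, so the hypothesis gives a point $w=(x_0,y_0)\in\bigcap_{i\in J}F_i$; since $F_i\subseteq K_i$, for each $t$ we get $w\in\bigcap_{i\in\sigma_t}K_i$ and therefore $y_0=\pi(w)\in D_{\sigma_t}$, whence $y_0\in\bigcap_t D_{\sigma_t}$. Applying the definition of $h(M)$ to the finite subfamily $\{D_\sigma\}\subset\K_M$ now produces a single $y^{\ast}\in M$ lying in every $D_\sigma$. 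To finish, I would slice at $y^{\ast}$: the sets $C_i:=\{x\in\R^d:(x,y^{\ast})\in K_i\}$ are convex, nonempty (use $\sigma=\{i\}$), and for any $i_1,\dots,i_{d+1}$ the inclusion $y^{\ast}\in D_{\{i_1,\dots,i_{d+1}\}}$ yields an $x\in C_{i_1}\cap\dots\cap C_{i_{d+1}}$; hence by Helly's theorem in $\R^d$ there is $x^{\ast}\in\bigcap_{i\in I}C_i$, and then $(x^{\ast},y^{\ast})\in\bigcap_{i\in I}F_i$.

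The main obstacle is exactly the choice singled out in the second paragraph: realizing that $h(M)$ must be applied not to the single projections $\pi(K_i)\cap M$ but to the enlarged family $\{D_\sigma\}$ of all $(\le d+1)$-wise projected intersections, so that the resulting common point $y^{\ast}$ is compatible with an application of ordinary Helly on each slice. Everything else is routine bookkeeping; in fact I do not expect closedness of $M$ to play a role in this argument, and it seems to be present in the statement only because that is the setting in which the result is usually stated.
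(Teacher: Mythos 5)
The paper does not prove this proposition at all---it is quoted directly from Averkov and Weismantel \cite[Thm.~1.1]{AW2012}---so there is no in-paper argument to compare yours against. Your proof, however, is correct and self-contained under the paper's definition of $h(S)$. The one genuinely non-routine idea is exactly the one you isolate: feeding the $M$-Helly number not the single projections $\pi(K_i)\cap M$ but the enlarged family $D_\sigma=\pi\bigl(\bigcap_{i\in\sigma}K_i\bigr)\cap M$ over all index sets $\sigma$ of size at most $d+1$, so that the resulting common point $y^{\ast}$ certifies the $(d+1)$-wise intersection hypothesis needed to run classical Helly on the slices $C_i$. All the individual steps check out: $\pi$ of a convex set is convex, so each $D_\sigma$ lies in $\K_M$; the union bound $\lvert J\rvert\le(d+1)h(M)$ correctly invokes the hypothesis on the $F_i$; and the final point $(x^{\ast},y^{\ast})$ lands in $\R^d\times M$ because $y^{\ast}\in M$. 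Your closing remark is also right: since the paper's convexity space $\K_M$ admits intersections of $M$ with \emph{arbitrary} convex sets, nothing in this argument uses closedness of $M$; that hypothesis is an artifact of the framework in \cite{AW2012}, and your proof in fact establishes the slightly more general statement for any $M\subseteq\R^k$.
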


Averkov and Weismantel also proved that $h(\Z^d\times M)\ge 2^dh(M)$ for closed $M\subseteq\R^k$, but this is less useful for upper bounds.
Similarly, one can prove a general bound in the case of discrete sets.

 \begin{theorem}[Conforti, Di Summa \cite{conforti-unp}]
 	If $S_1, S_2 \subset \R^d$ are discrete sets, then $h(S_1 \times S_2) \ge h(S_1)h(S_2)$. 
 \end{theorem}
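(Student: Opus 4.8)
The plan is to use the description of the Helly number of a discrete set in terms of $S$-vertex-polytopes (part~(2) of Lemma~\ref{lem:hollow}), together with the elementary fact that the vertices of a Cartesian product of two polytopes are exactly the pairs consisting of a vertex of the first factor and a vertex of the second. Write $h_i = h(S_i)$ for $i=1,2$ and assume $S_1,S_2\neq\emptyset$. First I would dispose of the degenerate case in which some $h_i$, say $h_1$, is infinite: fixing any $w \in S_2$, which is an isolated point of $S_2$ since $S_2$ is discrete, and an $S_1$-vertex-polytope $Q$ with as many vertices as we like, the copy $Q \times \{w\}$ is an $(S_1 \times S_2)$-vertex-polytope with the same number of vertices, so $h(S_1 \times S_2) = \infty$ and there is nothing more to prove. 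Hence from now on $h_1, h_2 < \infty$.

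The main step is an explicit construction. Since a product of discrete sets is discrete, $S_1 \times S_2$ is discrete, so by Lemma~\ref{lem:hollow} the number $h(S_1\times S_2)$ equals the supremum of the number of vertices over all $(S_1\times S_2)$-vertex-polytopes; it therefore suffices to exhibit one with $h_1 h_2$ vertices. Using Lemma~\ref{lem:hollow} again for $S_1$ and $S_2$, and noting that a finite supremum of positive integers is attained, I pick for each $i$ an $S_i$-vertex-polytope $P_i = \conv(V_i)$ with $\#(V_i)=h_i$, where $V_i = P_i \cap S_i$ is exactly the vertex set of $P_i$. Now take $P := P_1 \times P_2 \subseteq \R^{2d}$. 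Its vertex set is $V_1 \times V_2$, so $P$ has exactly $h_1 h_2$ vertices and, in particular, the $h_1 h_2$ points of $V_1 \times V_2$ lie in convex position; moreover
\[
 (S_1 \times S_2) \cap P \;=\; (S_1 \cap P_1) \times (S_2 \cap P_2) \;=\; V_1 \times V_2,
\]
so $P = \conv(V_1 \times V_2)$ contains no point of $S_1 \times S_2$ besides its own vertices. Thus $P$ is an $(S_1 \times S_2)$-vertex-polytope with $h_1 h_2$ vertices, and Lemma~\ref{lem:hollow} yields $h(S_1 \times S_2) \ge h_1 h_2 = h(S_1) h(S_2)$.

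The one ingredient that is not entirely formal, and the step I would check most carefully, is the identity stating that the vertex set of $P_1 \times P_2$ is the product of the vertex sets of $P_1$ and $P_2$: this is classical, but one should make sure the argument does not require $P_1$ or $P_2$ to be full-dimensional, since an $S_i$-vertex-polytope need not be (the definition only asks for points in convex position). Everything else --- discreteness of the product, attainment of the suprema, and the set identity $(A\times B)\cap(C\times D)=(A\cap C)\times(B\cap D)$ --- is routine bookkeeping. As an alternative one could run the argument through Hoffman's reformulation in Lemma~\ref{lem:hof}, taking $R = R_1 \times R_2$ for witnessing sets $R_i \subseteq S_i$, but pinning down $\bigcap_{(i,j)}\conv\bigl(R\setminus\{(x_i,y_j)\}\bigr)$ is fussier than the vertex-polytope picture, so I would prefer the approach above.
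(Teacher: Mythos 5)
Your proof is correct. Note that the paper does not actually prove this statement --- it is quoted from Conforti and Di Summa \cite{conforti-unp} without proof --- so there is no in-paper argument to compare against; your construction (take $S_i$-vertex-polytopes $P_i$ realizing the finite suprema in Lemma~\ref{lem:hollow}, and observe that $P_1\times P_2$ is an $(S_1\times S_2)$-vertex-polytope with $h(S_1)h(S_2)$ vertices) is the natural one and all the ingredients check out, including the one you flagged: the extreme points of $P_1\times P_2$ are exactly $V_1\times V_2$ by a direct midpoint argument that never uses full-dimensionality. The degenerate infinite case and the attainment of the suprema are handled correctly as well.
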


\section{\texorpdfstring{$S$}{S}-Helly numbers in low dimension}\label{sec:lowdim}

We start by looking at the one-dimensional case, here there is a very general result which immediatly follows from Lemma \ref{lem:hof}.

\begin{lemma}\label{lem:dim1}
If $S\subseteq\R$, then $h(S)=2$. 
\end{lemma}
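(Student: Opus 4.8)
The plan is to apply Lemma~\ref{lem:hof} directly. We want to show that for any $S \subseteq \R$ we have $h(S) = 2$, so we must (a) exhibit a configuration proving $h(S) \ge 2$, and (b) show that no configuration of size $h \ge 3$ can satisfy the condition of Lemma~\ref{lem:hof}. For the lower bound, note that as long as $S$ contains at least two points $x_1 < x_2$, we may take $R = \{x_1, x_2\}$: then $\conv(R \setminus \{x_1\}) = \{x_2\}$ and $\conv(R \setminus \{x_2\}) = \{x_1\}$, so their intersection is empty, which certainly does not meet $S$. (We should note the degenerate case where $S$ has fewer than two points, i.e.\ $\#(S) \le 1$; there $h(S)$ is trivially $\le 1$ by the finite bound, but since the paper has standing assumptions that $\spa S = \R^d$, here $S$ is infinite and this case does not arise — I would just remark this in passing, or simply assume $\#(S) \ge 2$.)

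For the upper bound, suppose for contradiction that there is a set $R = \{x_1, \dots, x_h\} \subset S$ with $h \ge 3$, in strictly convex position, such that $\bigcap_{i=1}^h \conv(R \setminus \{x_i\})$ misses $S$. Since we are in $\R$, "strictly convex position" just means the $x_i$ are distinct; relabel so that $x_1 < x_2 < \dots < x_h$. Then $R \setminus \{x_1\}$ is the set $\{x_2, \dots, x_h\}$, whose convex hull is the interval $[x_2, x_h]$; similarly $R \setminus \{x_h\}$ has convex hull $[x_1, x_{h-1}]$. The key point is that since $h \ge 3$, both of these intervals — and indeed $\conv(R \setminus \{x_i\})$ for every $i$ — contain the point $x_2$ (for $i \ne 2$, because $x_2$ is an interior or endpoint of the remaining range; for $i = 2$ we get $[x_1, x_h] \setminus$ nothing, wait—$\conv(R\setminus\{x_2\}) = [x_1,x_h] \supseteq \{x_2\}$ as well). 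Hence $x_2 \in \bigcap_{i=1}^h \conv(R \setminus \{x_i\})$, and $x_2 \in S$, contradicting the assumption. Therefore no such $R$ exists for $h \ge 3$, so $h(S) \le 2$.

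Combining the two bounds gives $h(S) = 2$, completing the proof. The main thing to be careful about is the bookkeeping in the upper-bound step: one must verify that $x_2$ (equivalently, any "interior" point of the sorted list, which exists precisely because $h \ge 3$) survives in every $\conv(R \setminus \{x_i\})$, including $i = 1$, $i = 2$, and $i = h$; this is the crux, and it is genuinely easy in dimension one because convex hulls of finite subsets of $\R$ are just intervals spanned by min and max. There is no serious obstacle here — the lemma does all the work — but it is worth stating the argument cleanly since it is the template for the higher-dimensional arguments to follow.
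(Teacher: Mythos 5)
Your argument is correct and is exactly the intended one: the paper gives no explicit proof, remarking only that the lemma "immediately follows from Lemma~\ref{lem:hof}", and your sorted-points observation that $x_2$ survives in every $\conv(R\setminus\{x_i\})$ once $h\ge 3$ is precisely that deduction, together with the easy two-point witness for the lower bound. Your aside about degenerate $S$ (fewer than two points, where $h(S)<2$) is a fair caveat that the paper's standing spanning assumption is meant to exclude.
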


That is, the $S$-Helly number of any subset of the real line is two.
Unfortunately in $\R^2$ we no longer have such a nice theorem. Consider the example below.

\begin{example}\label{ex:infinitehelly}
Let $S_n=\{p_1,\dots,p_n\}\subset\R^2$ be a set of $n$ points in strict convex position. Then any subset of $S_n$ can be expressed as $S_n \cap K$ where $K\subset\R^2$ is convex.
Lemma \ref{lem:hollow} implies $h(S_n)=n$. If for each $n$ we take a copy of $S_n$ such that their convex hulls do not intersect, then their union $S$ will have $h(S)=\infty$.
A simpler, but non-discrete, example with $h(S)=\infty$ is a circumference.
\end{example}

In spite of Example \ref{ex:infinitehelly}, we still have a general result in dimension two.

\begin{proof}[\bf Proof of Theorem \ref{thm:dim2}]
We begin by noting that the bound of four cannot be improved for arbitrary dense $S$. This can be done by taking $S=\R^2\setminus \{0\}$ and convex sets $\{x\ge 0\},\{x\le 0\},\{y\ge 0\},\{y\le 0\}$.

Now, assume that $h(S)\ge 5$. Lemma \ref{lem:hof} provides a set $R=\{x_1,\dots,x_5\}$ such that $\bigcap_i(R\setminus\{x_i\})$ does not intersect $S$. But since the points of $R$ are in strictly convex position, $\bigcap_i(R\setminus\{x_i\})$ has non-empty interior and must intersect $S$, a contradiction.%
%
%
\end{proof}

Theorem \ref{thm:dim2} does not hold in dimensions three and higher. In fact, we can construct a dense set $S$ in $\R^3$ with $h(S)=\infty$.

\begin{example}
In $\R^3$, consider a set of points $S_0$ on the plane $\{z=0\}$ with $h(S_0)=\infty$. Let $S=(\R^3\setminus\{z=0\})\cup S_0$. Then $S$ is clearly dense and $h(S)=\infty.$
\end{example}

One very important family of subsets of $\R^d$ are \emph{lattices}, i.e., discrete subgroups of $\R^d$. As we mention before, Doignon in \cite{Doi1973} made an interesting investigation of the Helly number of a lattice $S$ of rank $d$ and showed that $h(S)=2^d$ (his work was independently rediscovered by D. Bell and H. Scarf \cite{Bel1976,Sca1977}). Recently in \cite{ABDLL2014} this result was generalized to force not just an non-empty intersection with the lattice, but to control the number of lattice points in the intersection. Next we state a particular case of the results:

\begin{lemma}[See Theorem 1 in \cite{ABDLL2014}]\label{kpointslemma}
Let $k=0$, $1$ or $2$. Assume that $\F$ is a family of convex sets in $\R^2$ such that their intersection contains exactly $k$ points of $\Z^2$. Then there is a subfamily of $\F$ with at most $6$ elements such that their intersection contains exactly $k$ points of $\Z^2$.
\end{lemma}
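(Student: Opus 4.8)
The plan is to rephrase the assertion as an estimate for an ordinary $S$-Helly number and then to evaluate that number using Lemma~\ref{lem:hollow} together with an elementary parity argument. First the reduction. For $k=0$ the statement is just the contrapositive of Doignon's theorem $h(\Z^2)=2^2=4\le 6$, so I would assume $k\in\{1,2\}$ and set $P:=\bigl(\bigcap_{F\in\F}F\bigr)\cap\Z^2$, a set of $k$ points that lies in every member of $\F$. Put $S:=\Z^2\setminus P$. The sets $F\cap S$ ($F\in\F$) are $\K_S$-convex and have empty intersection, so by the definition of $h(S)$ some $h(S)$ of the $F\cap S$ already have empty intersection; the corresponding members of $\F$ then form a subfamily of size $h(S)$ whose intersection meets $\Z^2$ in a subset of $P$, hence --- since each member of $\F$ contains $P$ --- exactly in $P$. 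Thus it suffices to prove $h(S)\le 6$. (If $\F$ is infinite one first passes to a finite subfamily with the same intersection; this is routine and I suppress it.) Finally, translating by a lattice vector and applying an element of $GL_2(\Z)$ --- which fixes $\Z^2$ and hence $h(\Z^2\setminus P)$ --- we may assume $P=\{0\}$ when $k=1$, and $P=\{(0,0),(1,0)\}$ when $k=2$, using that the two points of $P$ have no lattice point between them and so span a primitive segment.

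Since $S$ is discrete, Lemma~\ref{lem:hollow} identifies $h(S)$ with the largest number $m$ of vertices of an $S$-vertex-polytope $Q=\conv(x_1,\dots,x_m)$, so the goal is $m\le 6$. The workhorse is a parity count: if $x_i\equiv x_j\pmod 2$ then $\tfrac12(x_i+x_j)$ is a lattice point of $Q$ which, being a proper convex combination of two vertices, is not a vertex, hence lies in $P$. When $k=1$ this forces $x_j=-x_i$, so each residue class modulo $2$ contributes at most two vertices; when $k=2$ the same bound of two per class follows by a pigeonhole argument on the three pairs among three hypothetical same-class vertices against the two points of $P$. By itself this gives only $m\le 8$. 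The extra ingredient is primitivity: if $v$ is a vertex and $0\in Q$, then $[0,v]\subseteq Q$, so either $v$ is primitive or $v/\gcd(v)$ --- the first lattice point of $[0,v]$ past $0$ --- is a non-vertex lattice point of $Q$, hence a point of $P$. A brief case check with the normalized $P$ then shows that every vertex is primitive with respect to each point of $P$, except for a possible vertex $(2,0)$ (primitivity failing at $0$) and, symmetrically, $(-1,0)$ (primitivity failing at $(1,0)$). Since a primitive vector cannot have both coordinates even, the residue class $(\mathrm{even},\mathrm{even})$ holds at most the vertex $(2,0)$; the class $(\mathrm{odd},\mathrm{even})$ becomes $(\mathrm{even},\mathrm{even})$ after subtracting $(1,0)$, so it holds at most $(-1,0)$; the remaining two classes contribute at most two each, whence $m\le 1+1+2+2=6$. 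When $k=1$ only $0\in P$, so every vertex is primitive, the class $(\mathrm{even},\mathrm{even})$ is empty, and the other three classes give $m\le 6$ directly.

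The one genuinely delicate case is $k=2$ with both points of $P$ lying inside $Q$: there the crude parity bound stalls at $8$, and one really must run the primitivity argument simultaneously from the two lattice points of $P$ to cut the two ``even'' residue classes down to a single vertex apiece. The remaining configurations --- $k=1$, or $k=2$ with at most one point of $P$ inside $Q$ --- collapse to the single--base-point primitivity argument, which is essentially Doignon's original count with one extra lattice point permitted. Beyond that, the only points needing care are the reduction to a finite subfamily and the $GL_2(\Z)$ normalization, both standard but load-bearing.
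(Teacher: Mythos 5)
The paper does not prove this lemma at all: it is imported as Theorem~1 of \cite{ABDLL2014}, so there is no internal proof to measure you against, and what you have written is a self-contained derivation of the cited result. Having checked it, I believe your argument is correct. The reduction is sound: with $P=(\bigcap\F)\cap\Z^2$ and $S=\Z^2\setminus P$, the family $\{F\cap S\}$ has empty intersection, so once $h(S)\le 6$ is known a subfamily of that size has intersection meeting $\Z^2$ exactly in $P$; and the normalization of $P$ is legitimate because convexity of $\bigcap\F$ forces the two points of $P$ (when $k=2$) to differ by a primitive vector. For the bound on the number of vertices of an $S$-vertex-polytope $Q$ via Lemma~\ref{lem:hollow}, I verified the delicate case ($k=2$, both $(0,0)$ and $(1,0)$ in $Q$): a vertex $v\equiv(0,0)\pmod 2$ has $\gcd(v)\ge 2$, so the first lattice point of $[0,v]$ past $0$ is a non-vertex lattice point of $Q$, hence equals $(1,0)$, and since every intermediate lattice point of $[0,v]$ must also lie in $P$ this forces $v=(2,0)$; symmetrically the class of $(1,0)$ contains at most $(-1,0)$; your pigeonhole on midpoints caps the other two parity classes at two vertices each, giving $1+1+2+2=6$. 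The configurations with one or zero points of $P$ inside $Q$ give $6$ and $4$ respectively, and $k=0$ is Doignon. The proof of the quantitative theorem in \cite{ABDLL2014} likewise rests on parity classes of the vertex set, but your primitivity refinement from the base points of $P$ is a clean, planar-specific way to reach the sharp constant $6$ without the enumeration of lattice polygons used there. The only genuine debts are the two you flag yourself: the passage to finite families (harmless for this paper's application, where the lemma is invoked on finitely many half-planes) and the compressed case analysis for the non-delicate configurations; both are routine but should be written out in a final version.
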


The family consisting of the set difference between two lattices is a rather rich set of points in $\R^d$: they have interesting periodic patterns but contain complicated empty regions, and are closely related to tilings of space \cite{grunbaum+shephard}. See Figure \ref{fig} for some examples. 
Next we present a tight result in dimension two, which we generalize in Theorem \ref{thm:difflat} to arbitrary dimensions.

\begin{figure}
\includegraphics{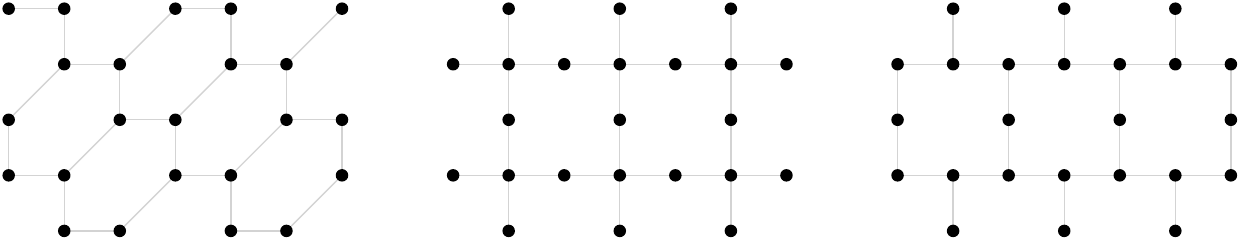}
\caption{Three examples of lattice differences.}
\label{fig}
\end{figure}

\begin{proof}[\bf Proof of Theorem \ref{thm:lattice}]
From Lemma \ref{lem:hollow} it is enough to bound the number of faces of an $S$-face-polygon. Assume $k>6$ and there is an $S$-face-polygon $K$ determined by the semiplanes $H_1,H_2,\dots,H_k$.

Note that if $\abs{\inte(K)\cap\Z^2}\leq 2$, then by Lemma \ref{kpointslemma} applied to the interiors of the $H_i$, we can find $H_{i_1},\dots,H_{i_6}$ among our original semiplanes such that $\inte(\bigcap_j H_{i_j})$ contains no additional points of $L$. But the relative interior of at least one side of $K$ is contained in $\inte(\bigcap_j H_{i_j})$, contradicting that there is a point of $S$ in each side of $K$. Therefore $\abs{K\cap L}\ge 3$. We look at two possible cases:

1.
Suppose there is a triplet of non collinear points in $K\cap L$. Then we may choose $p, q, r\in K\cap L$ such that $T=\text{conv}(p,q,r)$ contains no other point from $L$. Consequently, the area of $T$ is equal to $\frac{d(L)}2$, where $d(L)$ is the determinant of the lattice $L$. On the other hand, since $\inte(K)\cap S$ is empty, $\inte(K)\cap L=\inte(K)\cap \Z^2$ and therefore $T\cap\Z^2=\{p,q,r\}$. Hence the area of $T$ is also equal to $\frac{d(\Z^2)}2$. This contradicts the fact that $L$ is a proper sublattice of $\Z^2$.

2.
Suppose that the points of $\inte(K)\cap L$ are collinear.
After applying a suitable $\Z^2$-preserving linear transformation we may assume that $\inte(K)\cap L$ is contained in the $x$-axis.
Since $K$ intersects the $x$-axis in an interval with at least $3$ interior lattice points, its length is greater than $2$. Suppose $K$ contains a point with $y$-coordinate at least $2$. Then by convexity $K\cap\{y=1\}$ has width greater than $1$ and contains an interior lattice point $p$. Because the elements of $\inte(K)\cap L$ are collinear, $p$ cannot be in $L$, thus $p$ must be in $S$. However, this is impossible as $K$ is $S$-free. Therefore $K\subseteq\{\abs{y}<2\}$.
Each edge of $K$ contains a point from $S$ in its relative interior and every lattice point in $K$ has $y$-coordinate $-1$, $0$, or $1$. But a convex body can intersect each of the three lines $\{y=-1\}$, $\{y=0\}$, $\{y=1\}$ in at most two points. It follows that $K$ has at most six sides, contradicting the assumption that $K$ has at least seven sides.
\end{proof}

We conclude with a conjecture that stresses interesting connections to number theory.

\begin{conj}
Let $\mathbb P$ be the set of prime numbers. Then $h(\mathbb P^2)=\infty$.
\end{conj}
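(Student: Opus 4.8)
We sketch a possible line of attack. By Lemma~\ref{lem:hollow}, applicable since $\mathbb{P}^2$ is discrete, one has $h(\mathbb{P}^2)=g(\mathbb{P}^2)$, so it suffices to produce, for every $n$, a set of $n$ points of $\mathbb{P}^2$ in strictly convex position whose convex hull contains no other point of $\mathbb{P}^2$; equivalently, to build arbitrarily large empty (relative to $\mathbb{P}^2$) convex polygons with vertices in $\mathbb{P}^2$. Note that the product lower bound of Conforti and Di Summa only yields $h(\mathbb{P}^2)\ge h(\mathbb{P})^2=4$, so a genuinely planar construction is needed. On the other hand, unlike the case of $\Z^2$ itself (where $h(\Z^2)=2^2=4$ precludes such polygons), discarding all composite coordinates leaves a great deal of room: the density of $\mathbb{P}^2$ near $(N,N)$ is only of order $(\log N)^{-2}$.

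The construction I would attempt is a \emph{thin slab}. Fix $m=\lceil n/2\rceil$ consecutive primes $q_1<q_2<\dots<q_m$ and take them as the $y$-coordinates of the vertices. Since no prime lies strictly between $q_1$ and $q_m$, any convex polygon contained in the slab $\{q_1\le y\le q_m\}$ meets $\mathbb{P}^2$ only on the lines $y=q_i$. On the line $y=q_i$ I would place two vertices at primes $a_i<b_i$ chosen so that the open interval $(a_i,b_i)$ contains no prime; then the hull meets the line $y=q_i$ exactly in the segment $[a_i,b_i]\times\{q_i\}$, whose only points in $\mathbb{P}^2$ are the two endpoints, which are vertices. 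If the $2m$ points $(a_i,q_i),(b_i,q_i)$ can be put in strictly convex position, this is a $\mathbb{P}^2$-vertex-polytope with $2m\ge n$ vertices, and we are done.

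The force of ``strictly convex position'' is that the left endpoints $a_1,\dots,a_m$ must trace out a strictly convex lattice chain and the right endpoints $b_1,\dots,b_m$ a strictly concave one (relative to the spacings of the $q_i$), so that the widths $w_i=b_i-a_i$ follow a prescribed, roughly unimodal profile whose maximum is forced to grow with $n$. Thus the construction reduces to a purely number-theoretic statement: near a suitable scale one must find \emph{simultaneously} a block of $m$ consecutive primes (the $y$-levels) and, at $m$ nearby but distinct horizontal locations lying on a convex arc, prime-free intervals of the prescribed lengths whose endpoints are themselves prime.

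This coordination step is the main obstacle, and it is presumably where the conjecture becomes genuinely hard: arranging the distribution of primes in two essentially independent directions — blocks of consecutive primes in one and long, precisely located gaps with prime endpoints in the other — is a statement of Hardy--Littlewood / prime $k$-tuples type, and proving it unconditionally seems beyond current methods; even securing a lower bound of the form $h(\mathbb{P}^2)\ge n_0$ for moderately large $n_0$ looks to require an ingenious admissible-tuple construction. One can hope that the large-gaps technology of Ford--Green--Konyagin--Maynard--Tao, together with the Maynard--Tao sieve for primes in short intervals, yields at least that $h(\mathbb{P}^2)$ grows; and under the Bateman--Horn / Hardy--Littlewood conjectures the full statement $h(\mathbb{P}^2)=\infty$ should follow from the slab construction above. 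Making any of this unconditional is the crux.
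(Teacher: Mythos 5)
The statement you are addressing is stated in the paper as an open \emph{conjecture}: the authors do not prove it, and only report the partial lower bound $h(\mathbb P^2)\ge 14$. So there is no ``paper proof'' to compare against, and your proposal must be judged on its own terms --- and on those terms it is not a proof but a (candid) reduction of the conjecture to another open problem. Your framework is sound as far as it goes: $\mathbb P^2$ is discrete, so Lemma \ref{lem:hollow} does reduce the question to exhibiting arbitrarily large $\mathbb P^2$-vertex-polytopes, and the thin-slab idea (consecutive primes $q_1<\dots<q_m$ as $y$-levels, so the hull meets $\mathbb P^2$ only on those lines, with consecutive primes $a_i<b_i$ as the two vertices at each level) correctly isolates what such a polygon must look like. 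You also correctly note that the Conforti--Di Summa product bound together with $h(\mathbb P)=2$ gives only $h(\mathbb P^2)\ge 4$, which is weaker than the authors' own $14$.

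The genuine gap is the coordination step you yourself flag: for the $2m$ points to be in strictly convex position, the left endpoints $a_1,\dots,a_m$ must lie on a strictly convex chain and the right endpoints on a strictly concave one, which forces the gap lengths $b_i-a_i$ to follow a prescribed profile with maximum growing in $m$; producing consecutive primes $q_i$ at the required vertical spacings \emph{simultaneously} with prime gaps of prescribed lengths at prescribed convex horizontal locations is a prime-constellation statement of Hardy--Littlewood type, far beyond what current sieve or large-gap technology delivers unconditionally. Nothing in the proposal closes this; the appeal to Ford--Green--Konyagin--Maynard--Tao and to Bateman--Horn is aspirational. As written, the argument establishes no new lower bound on $h(\mathbb P^2)$ at all --- not even a finite one --- so the conjecture remains exactly as open after your sketch as before it. If you want an unconditional contribution, a more realistic target is an explicit finite construction improving the authors' bound $h(\mathbb P^2)\ge 14$ by exhibiting a concrete empty convex polygon in $\mathbb P^2$ with more vertices.
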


We have been able to show that $h(\mathbb P^2) \ge 14$ and $h((\Z \setminus \mathbb P)^2)$ is finite (e.g., through a simple modification of argument 
used in Theorem \ref{thm:difflat} and Lemma \ref{lem:hof}). This problem appears to be related to the Gilbreath-Proth conjecture \cite{gilbreath-proth} on the 
behavior of differences of consecutive primes.

\section{Helly Numbers for Subgroups of \texorpdfstring{$\R^d$}{R}  and differences of lattices}\label{sec:generaldim}

Now we move to spaces of arbitrary dimension and to sets $S$ with rich algebraic structure.

\subsection{Subgroups of \texorpdfstring{$\R^d$}{Rd}}
We look at the case when $S$ is an additive subgroup of $\R^d$. The most famous examples are lattices. We recall a lattice is defined as a discrete subgroup of $\R^d$; it is well known that lattices in $\R^d$ are generated by at most $d$ elements (see \cite{Bar2002}). This is precisely the context of Doignon's theorem, which says that $h(\Z^d)=2^d$.

If a group $S\subset\R^d$ is finitely generated by $m$ elements, then the natural epimorphism between $\Z^m$ and $S$ given by linear combinations yields a linear map from $\R^m$ onto $\R^d$. From here Doignon's theorem implies that $h(S)\le 2^m$. However, there is no dependency on $d$ here, and $m$ could be large compared to $d$. 

On the other hand, it is known in the topological groups literature (see e.g. \cite{Dik2013}) that every closed subgroup that linearly spans $\R^d$ is of the form $\phi(\R^k\times\Z^{d-k})$, where $\phi:\R^d\to\R^d$ is a linear bijection. In this case the mixed version of Helly's and Doignon's theorems by Averkov and Weismantel \cite{AW2012}, guaranties that $h(\Z^{d-k}\times\R^k)=2^{d-k}(k+1)$.

The closure $\bar{S}$ of a group $S$ is also a group; therefore we may assume that
$\bar S=\R^k\times\Z^{d-k}$. This allows us to express $S$ as a product of the form $\Z^{d-k} \times D$, where $D$ is a dense subgroup of $\R^k$. In this case we might expect that $h(S)\le (k+1)2^{d-k}$, but the following example shows that this is not the case.

\begin{example}
Let $k\leq d$ and $1, \alpha_1,\alpha_2,\dots \alpha_k$ be linearly independent numbers when considered as a vector space over $\Q$. Let $\{ e_1,e_2,\dots e_d\}$ be the canonical basis of $\R^d$ and consider the group
$$S=\langle e_1,\dots,e_{k-1},\alpha_1(e_{k}+e_1),\dots,\alpha_{k-1}(e_{k}+e_{k-1}),\alpha_ke_k, e_{k+1},\dots e_{d} \rangle.$$
Note that $S=D\times \Z^{d-k}$, where
$$D=\langle e_1,\dots,e_{k-1},\alpha_1(e_{k}+e_1),\dots,\alpha_{k-1}(e_{k}+e_{k-1}),\alpha_ke_k\rangle$$
is a dense set in $\R^d$, so the closure of $S$ is $\R^k\times\Z^{d-k}$. Observe that if we intersect $S$ with the space $\{x_k=0\}$ we obtain a $(d-1)$-dimensional lattice.
Since $$h(S\cap \{x_k=0\})=2^{d-1},$$ we can construct a family $\F$ with $2^{d-1}+2$ elements containing $\{x_k\ge0\}$ and $\{x_k\le0\}$ to show that $h(S)\ge 2^{d-1}+2$.
If $k\geq 3$, then $h(S)$ is larger than $(k+1)2^{d-k}$.
\end{example}

In spite of this, we state the following conjecture:

\begin{conj}
For any subgroup $G\subset \R^d$, the Helly number $h(G)$ is finite.
\end{conj}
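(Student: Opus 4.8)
\medskip
\noindent\textbf{A proposed approach.}
We sketch a strategy and indicate the point at which it stalls. By the structure theory of closed subgroups of $\R^d$ (see \cite{Dik2013}) together with the invariance of Helly numbers under linear isomorphisms, we may assume $\overline{G}=\R^k\times\Z^{d-k}$, so that $G=D\times\Z^{d-k}$ with $D$ dense in $\R^k$. Several cases are settled already: if $G$ is finitely generated by $m$ elements, the natural surjection $\Z^m\twoheadrightarrow G$ extends to a linear surjection $\R^m\to\R^d$ and Doignon's theorem \cite{Doi1973} gives $h(G)\le 2^m$; if $D$ is a module over a dense subgroup of $\R$ in the sense of Theorem \ref{thm:modules}, then $h(G)$ is bounded; and if $G$ is closed then $h(G)=2^{d-k}(k+1)$ by \cite{AW2012}. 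The essential remaining case is an infinitely generated dense $D$, and the plan is to induct on $d$ through the Hoffman characterization of Lemma \ref{lem:hof}.

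So fix a configuration $R=\{x_1,\dots,x_h\}\subset G$ in strictly convex position witnessing $h(G)\ge h$, i.e.\ with $P:=\bigcap_{i=1}^{h}\conv(R\setminus\{x_i\})$ disjoint from $G$; the goal is to bound $h$. Two remarks cut the problem down. First, if $h\ge d+2$ then Radon's theorem splits $R=A\sqcup B$ with a point $p\in\conv A\cap\conv B$; since each $x_i$ lies in exactly one of $A,B$, we get $p\in\conv(R\setminus\{x_i\})$ for all $i$, so $P\ne\emptyset$ --- beyond $d+1$, a witness for a subgroup must be ``hollow'', which generalizes the argument behind Theorem \ref{thm:dim2}. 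Second, since $R\subset\overline G$ and $h(\overline G)=2^{d-k}(k+1)$ is finite, once $h>2^{d-k}(k+1)$ the set $R$ is not a Hoffman witness for $\overline G$, so $P$ meets $\overline G$. As $P$ avoids the dense set $G$, it follows that $P\cap\overline G$ contains no nonempty relatively open subset of $\overline G$; concretely, each slice $P_n:=P\cap(\R^k\times\{n\})$ with $n\in\Z^{d-k}$ is a polytope of dimension at most $k-1$ that avoids $D\times\{n\}$, and only finitely many of these slices are nonempty.

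The decisive step is to feed each slice back into the inductive hypothesis. Identifying $\R^k\times\{n\}$ with $\R^k$, the polytope $P_n$ is full-dimensional in the proper affine subspace $A_n:=\aff(P_n)$ and avoids $D$, so $D$ is not dense in $A_n$; if $A_n$ contains a point $a\in D$, translation by $a$ identifies $D\cap(A_n-a)$ with a subgroup of a space of dimension $\dim A_n<d$, to which induction applies. Combining the bounds so obtained over the finitely many slices through the subadditivity estimate $h(S_1\cup S_2)\le h(S_1)+h(S_2)$, and over the $\Z^{d-k}$-directions through Doignon-type counting, would yield a recursion $h(G)\le\Phi(d)$ with $\Phi$ explicit.

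The main obstacle is exactly this last step, and it is double. On one hand, the affine subspace $A_n$ need not contain any point of $D$, and even if it does, $D\cap A_n$ can be an honest lattice rather than a dense set: in $G=\Q\times(\Q+\Z\pi)\subset\R^2$ the line through $0$ and $(1,\pi)\in G$ meets $G$ only in $\Z\cdot(1,\pi)$. Thus the ``lattice-like'' directions of $G$ have to be tracked, and the recursion must tolerate the mixing of dense and discrete behaviour at every level of the descent. On the other hand, Lemma \ref{lem:hof} delivers a witness assembled from convex hulls of the group's \emph{own} points, whereas what the descent returns is a hollow polytope $P_n$ avoiding a subgroup, which need not be of that special form; bridging this gap uniformly in $d$ is what we have not managed. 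That such a gap can be fatal for non-closed sets is precisely the content of the Conforti--Di Summa example with $f(S)=1$ but $h(S)=\infty$ \cite{conforti-unp}.
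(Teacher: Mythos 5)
The statement you were asked to prove is not proved in the paper: it is stated there as an open conjecture, accompanied only by the remarks that it holds for finitely generated groups (via Doignon's theorem, $h(G)\le 2^m$ for $m$ generators) and for closed groups (via Averkov--Weismantel, $h(\R^k\times\Z^{d-k})=2^{d-k}(k+1)$), plus an example showing $h(G)$ can exceed $2^d$. Your proposal, by its own honest admission, does not prove it either, so there is no proof to compare against and the conjecture remains open after your attempt. The partial observations you make are correct: the reduction to $G=D\times\Z^{d-k}$ with $D$ dense matches the paper's own normalization; the Radon argument showing that a Hoffman witness with $h\ge d+2$ points has $P\ne\emptyset$ is right; and the observation that for $h>2^{d-k}(k+1)$ the polytope $P$ must meet $\overline G$ while avoiding the dense set $G$, forcing every slice $P_n$ to be a lower-dimensional polytope avoiding $D$, is a genuine and useful structural constraint.

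The obstacles you name at the end are the real ones, and it is worth being explicit about why they are fatal to the induction as set up. First, the trace of $D$ on a proper affine subspace $A_n$ can degenerate arbitrarily: your example $G=\Q\times(\Q+\Z\pi)$, where the line through $0$ and $(1,\pi)$ meets $G$ only in $\Z\cdot(1,\pi)$, is correct, and it shows the inductive hypothesis ``dense subgroup of a lower-dimensional space'' is not inherited --- one would need a statement uniform over all subgroups of all dimensions at once, which is exactly the conjecture again, so the induction is circular unless a stronger invariant is controlled. Second, and more fundamentally, Lemma \ref{lem:hof} characterizes $h(G)$ via configurations of points \emph{of $G$} whose ``facet hulls'' miss $G$; what your descent produces is merely a hollow polytope $P_n$ avoiding a subgroup of a smaller space, with no control on whether $P_n$ arises from such a point configuration, so the inductive bound cannot be invoked on it. The paper's citation of the Conforti--Di Summa example (a set with $f(S)=1$ but $h(S)=\infty$) is indeed the cautionary tale that for non-closed, non-discrete sets these combinatorial reductions can fail completely. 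In short: no error of reasoning, but no proof --- the conjecture is still open.
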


Note that this conjecture is true for all finitely generated groups by Doignon's theorem.
One might be tempted to conjecture that $h(G)\le 2^d$. As we see in the next example, this is not always the case.

\begin{example}
Let $G_0\subset\R$ be the group generated by $1$, $\pi$ and $e$, so that $G=\Z^2\times G_0$ 
is a subgroup of $\R^3$. Consider the points $x_1=(0, 1, 3)$, $x_2=(2, 1, e)$ and $x_3=(2, 3, \pi)$ in $\R^3$. 
The important property these points have is that the $3$ midpoints they define are not in $G$.
Let $H\subset\R^3$ be the plane through $x_1,x_2,x_3$. Consider three additional points $x_4,x_5,x_6\in G$ 
above $(0,2),(1,0),(3,3)\in\Z^2$ and slightly above $H$ and another $3$ points $x_7,x_8,x_9$ above the same 
three points in $\Z^2$ but this time slightly below $H$. If $R=\{x_1,\dots,x_9\}$ then the hypothesis of 
Lemma \ref{lem:hof} are satisfied (see Figure \ref{fig:9example}), and therefore $h(G)\ge 9$.
\end{example}

\begin{figure}
\includegraphics{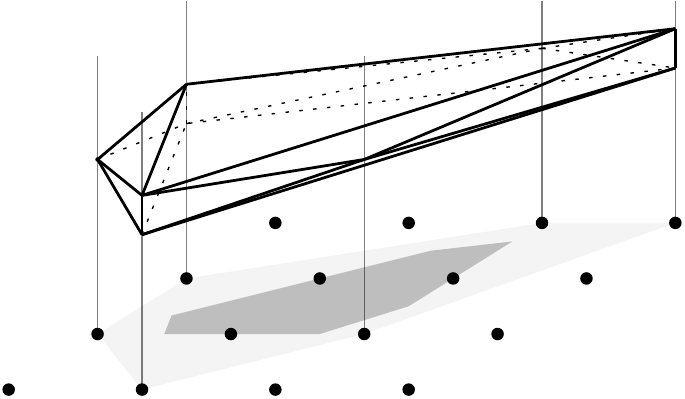}
\caption{The convex hull of $9$ points in $\Z^2\times G$, in dark gray is the projection of the intersection described in Lemma \ref{lem:hof}}.
\label{fig:9example}
\end{figure}

If the group $G$ has some additional structure, we are able to bound its Helly number.

\begin{proof}[\bf Proof of Theorem \ref{thm:modules}]
We have already proved in Lemma \ref{lem:dim1} the case $d=1$ and in Theorem \ref{thm:dim2} the case $d=2$. For the general case we use induction over $d$.

Let $d>1$; assume that the theorem is false and $h=h(S)>2d$.
This means that there is a family $\F$ of convex sets such that $\bigcap \F$ does not intersect $S$ but for every subfamily $\G$ with $h-1$ elements, $\bigcap\G$ intersects $S$.

Since $S$ is dense, $\inte(\bigcap \F)=\emptyset$; therefore $m=\dim(\bigcap \F)<d$. Consequently, by Steinitz's theorem \cite{Ste1913}, there is a subfamily $\cH\subset\F$ with $2(d-m)$ elements such that $\dim(\bigcap\cH)=\dim(\bigcap\F)$.

Since $2(d-m)<h$, there exists $s\in(\bigcap\mathcal H)\cap S$. 
Then, up to translation, $R=\text{span}(\bigcap\cH)\cap S$ is a group contained in an $m$-dimensional subspace of $\R^d$. The induction hypothesis on $R$ implies $h(R)\leq 2m$.
By the definition of $\F$, every $(h-1)-2(d-m)$ elements of $\F\setminus\cH$ intersect in $R$.
However, $\bigcap(\F\setminus\cH)$ does not intersect $R$, as then $\bigcap\mathcal F$ would intersect $R\subset S$. Thus $(h-1)-2(d-m)<h(R)$ and therefore $h\le h(R)+2(d-m)\le 2d$. This contradicts our choice of $h$.
\end{proof}

To show that this is best possible we have the following example.

\begin{example}
Let $a_1,a_2,\dots,a_d,b_1,b_2,\dots,b_d,c_1,c_2,\dots,c_d$ be linearly independent numbers when considered as a vector space over $\Q$ and satisfying $a_i<c_i<b_i$. Consider the $\Q$-module $S$ generated by the $2d$ vectors of the form
\begin{align*}
A_i&=(c_1,\dots,c_{i-1},a_i,c_{i+1},\dots,c_d),\\ B_i&=(c_1,\dots,c_{i-1},b_i,c_{i+1},\dots,c_d).
\end{align*}
It is easy to see that $C=(c_1,\dots,c_d)\not\in S$.

Let $K_i^-$ and $K_i^+$ be the two semispaces with boundary through $C$ orthogonal to $e_i$ so that $A_i\in K_i^-$ and $B_i\in K_i^+$. If $\F$ is the family consisting of these semispaces then $A_i\in\bigcap(\F\setminus\{K_i^+\})$ and $B_i\in\bigcap(\F\setminus\{K_i^-\})$ but $\bigcap\F=\{C\}$ does not intersect $S$; therefore $h(S)\ge 2d$.
\end{example}

\subsection{Difference of lattices in \texorpdfstring{$\R^d$}{Rd}}

Recall that the Ramsey number $$R_k=R(\underbrace{3,3,\dots,3}_k)$$
is the minimum natural number needed to guarantee the existence of a monochromatic triangle in any edge-coloring with $k$ colors of the complete graph with $R_k$ vertices. We now prove Theorem \ref{thm:difflat} 
with constant $C_k=R_k-1$.

\begin{proof}[\bf Proof of Theorem \ref{thm:difflat}]
Assume that $h(S)>(R_k-1)2^d$. Lemma \ref{lem:hollow} implies the existence of an $S$-vertex-polytope $K$ with $h(S)$ vertices. We say that two elements of $\Z^d$ have the same parity if their difference has only even entries, or equivalently, if their midpoint is in $\Z^d$. Since the vertex-set of $K$ has more than $(R_k-1)2^d$ elements, it contains a subset $V$ consisting of $R_k$ points with the same parity.

By definition of $K$, if $A,B\in V$ then their midpoint cannot be in $S$; it must be contained in some $L_i$. Consider $V$ as the vertex-set of the complete graph $G$ and assign to each edge $AB$ a color $i$ so that the midpoint of $A$ and $B$ is in $L_i$.

Since $G$ has $R_k$ vertices, by Ramsey's theorem it contains a monochromatic triangle. This means that there exist $A_1,A_2,A_3\in V\subset S$ such that the three midpoints $M_j=\frac12(A_j+A_{j+1})$ are in some $L_i$. But then $A_1=M_1-M_2+M_3\in L_i$ which contradicts the fact that $A_1\in S$.
\end{proof}

\section{Analysis of a popular method to color Helly-type theorems}\label{sec:color}

In this final section, we will see the colorful version of Helly's theorem. It can clearly be interpreted as a situation when the constraints are divided 
into colors, where the colors indicate constraint types or categories. What the theorem guarantees then is a situation in which at least one entire color class has a common solution. 
We discuss the essential features of a general method to obtain colorful versions of some Helly-type theorems in the sense of B\'ar\'any and Lov\'asz. This method has been used in a number of occasions \cite{AW2012,baranymatousekfracZ-helly} and can be applied to the usual Helly's theorem, Doignon's theorem, Theorem 1 in \cite{ABDLL2014} and our Theorems \ref{thm:lattice}  and \ref{thm:difflat} presented in this paper. This idea has been around for some time and it is based on Lov\'asz's proof of Helly's theorem, but as far as we know it has never been abstracted to its essential features.

Let $\Pp(K)$ be a property of a convex set $K\subset\R^d$. By a property we mean a function from the set of convex sets in $\R^d$ that takes values in $\{\text{true, false}\}$. As examples of properties $\Pp(K)$ we have:
\begin{enumerate}
\item[(i)] $K$ intersects a fixed set $S$,
\item[(ii)] $K$ contains at least $k$ integer lattice points,
\item[(iii)] $K$ is at least $k$-dimensional,
\item[(iv)] $K$ has volume at least $1$.
\end{enumerate}

The following generalizes the definition of Helly number we gave in the introduction to properties shared by sets.
\begin{definition}\label{def:colorhelly}
For a property $\Pp$ as above, the \emph{Helly number} $h=h(\Pp)\in\N$ is defined as the smallest number satisfying the following.
\begin{align} \label{helly:colorcond}
	&\forall i_1,\ldots,i_h \in [m] : \Pp(F_{i_1} \cap \cdots \cap F_{i_h}) && \Longrightarrow && \Pp(F_1 \cap \cdots \cap F_m)
\end{align}
for all $m \in\N$ and convex sets $F_1, \ldots, F_m$. If no such $h$ exists, then $h(\Pp):=\infty$.
\end{definition}

In particular, if $\Pp$ is the property (i) then $h(\Pp)=h(S)$. For property (ii), the quantitative Doignon theorem in \cite{ABDLL2014} gives bounds for $h(\Pp)$. A result from Gr\"unbaum \cite{Gru1962} bounds and determines $h(\Pp)$ for property (iii) with $0\le k\le d$. In the case of property (iv), it is easy to show that $h(\Pp)=\infty$, despite the existence of a quantitative Helly theorem \cite{Bar1982}.

\begin{definition}\label{def:color} We say that property $\Pp$ is:
\begin{enumerate}
\item[1.] \emph{Helly} if the corresponding Helly number $h(\Pp)$ is finite.
\item[2.] \emph{Monotone} if $K\subset K'$ and $\Pp(K)$ is true implies that $\Pp(K')$ is true too.
\item[3.] \emph{Orderable} if for any finite family $\F$ of convex sets there is a direction $v$ such that:
\begin{enumerate}
\item For every $K\in\F$ where $\Pp(K)$ is true, there is a containment-minimal $v$-semispace (i.e., a half-space of the form $\{x:v^T x\ge 0\}$) $H$ such that $\Pp(K\cap H)$ is true.
\item There is a unique minimal $K'\subset K\cap H$ with $\Pp(K')$ true.
\end{enumerate}
\item[4.] \emph{$N$-colorable} if for any given finite families $\F_1,\dots\F_N$ of closed convex sets in $\R^d$ (each a colored family), such that $\Pp(\bigcap\G)$ is true for every rainbow subfamily $\G$ (i.e. a family with $\G\cap\F_i=1$ for every $i$), then $\Pp(\bigcap\F_i)$ is true for some family $\F_i$.
\end{enumerate}
\end{definition}

It is not too difficult to see that property (i) is Helly, monotone and orderable when $S$ is discrete or the whole space. Property (ii) is also Helly, monotone and orderable, whereas property (iii) fails to be orderable and property (iv) is neither Helly nor orderable. Under this definition we can state the key result:

\begin{theorem}[Generic Colorful Helly]\label{teo:color}
A Helly, monotone, orderable property $\Pp$ is always $h(\Pp)$-colorable.
\end{theorem}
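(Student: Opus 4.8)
The plan is to mimic Lov\'asz's proof of the colorful Helly theorem, but to carry it out using only the three abstract hypotheses (Helly, monotone, orderable) in place of the concrete geometric facts about convexity in $\R^d$. Set $h=h(\Pp)$ and suppose, for contradiction, that families $\F_1,\dots,\F_h$ of closed convex sets are given with $\Pp(\bigcap\G)$ true for every rainbow subfamily $\G$, yet $\Pp(\bigcap\F_i)$ false for each $i$. The goal is to produce a rainbow subfamily $\G$ with $\Pp(\bigcap\G)$ false, contradicting the hypothesis.

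First I would observe that since $\Pp(\bigcap\F_i)$ is false and $\Pp$ is Helly with Helly number $h$, for each color $i$ there exist $h$ sets $F_{i,1},\dots,F_{i,h}\in\F_i$ whose intersection $K_i:=F_{i,1}\cap\cdots\cap F_{i,h}$ already fails $\Pp$; by monotonicity we may as well replace $\F_i$ by these finitely many sets, so without loss of generality each $\F_i$ is finite and $\Pp(\bigcap\F_i)$ is false. Now apply orderability to the combined finite family $\F=\F_1\cup\cdots\cup\F_h$: we get a direction $v$ and, for each $K\in\F$ with $\Pp(K)$ true, a containment-minimal $v$-semispace $H_K$ with $\Pp(K\cap H_K)$ true and a unique minimal $K'_K\subseteq K\cap H_K$ satisfying $\Pp$. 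The intended picture is that $v$ gives us a linear order (``height'') on these minimal witnesses, and the standard Lov\'asz argument picks, within each color, the set whose witness is ``lowest'' in the $v$-direction, and then argues that the rainbow family formed by these $h$ champions cannot satisfy $\Pp$ — because if it did, its own minimal witness would have to sit strictly below the witness of whichever color contributed the extremal constraint, contradicting minimality/uniqueness.

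The key steps, in order, are: (1) reduce to finite $\F_i$ with $\Pp(\bigcap\F_i)$ false, as above; (2) fix the common direction $v$ from orderability applied to $\bigcup_i\F_i$; (3) for each color $i$, among the sets $K\in\F_i$ for which $\Pp(K)$ holds (note $\Pp(\R^d)$ is true, so these sets do the ``cutting''), select one whose associated minimal witness $K'_K$ is extremal in the $v$-direction, and assemble these $h$ chosen sets into a rainbow family $\G$; (4) argue by monotonicity that $\Pp(\bigcap\G)$ true would force the existence of a minimal $\Pp$-witness inside $\bigcap\G$, located (in the $v$-order) at or below every champion witness; (5) derive a contradiction with the containment-minimality of the semispace $H$ or the uniqueness of the minimal witness for the color that realized the extremum, concluding $\Pp(\bigcap\G)$ is false. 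This is exactly the obstruction the hypothesis forbids, so $h(\Pp)$-colorability follows.

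The main obstacle, I expect, is step (4)–(5): translating the geometric ``lowest intersection point lies in all the halfspaces'' step of Lov\'asz's original proof into a purely order-theoretic statement that the abstract \emph{orderable} axiom actually supports. In the classical proof one literally takes the point of $\bigcap\F$ minimizing $v^Tx$; here we only have, per set, a minimal semispace and a unique minimal witness, so I must be careful to phrase the induction on the number of colors (or on $\F$) so that at each stage I am comparing the \emph{unique} minimal witnesses, using monotonicity to pass between $K$, $K\cap H$, and $K'$, and using uniqueness to pin down equality when two witnesses coincide. I would also need to double-check the degenerate cases — a color all of whose sets have $\Pp$ false already gives the contradiction immediately, and the case where $\bigcap\F_i$ itself is the troublesome set is handled by the reduction in step (1). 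Once the bookkeeping of the $v$-order on witnesses is set up cleanly, the contradiction should fall out just as in the concrete instances (Helly, Doignon, Theorems~\ref{thm:lattice} and~\ref{thm:difflat}) that motivated the abstraction.
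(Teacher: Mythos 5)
There is a genuine gap, and it sits exactly where you suspect, in steps (3)--(5). The Lov\'asz scheme requires the orderability axiom to be applied to the family of \emph{almost-rainbow intersections} --- the sets $\bigcap\G$ where $\G$ picks one member from every color class except one --- because those are the sets whose minimal $v$-semispaces and unique minimal witnesses get compared. You instead apply orderability to $\bigcup_i\F_i$ and select, within each color class, an individual set whose witness is extremal; but the witnesses of individual sets carry no information about the rainbow intersections (each $K\in\F_i$ could be enormous with a trivial witness while every rainbow intersection is tiny), and the axioms give you no way to relate the two: monotonicity only transfers the truth of $\Pp$ upward under containment, and orderability supplies no order on witnesses across different sets beyond containment of the minimal semispaces. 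So the comparison you want in step (4), that a minimal witness of $\bigcap\G$ sits ``at or below'' the champions' witnesses, is not supported by anything in Definition \ref{def:color}; moreover $\bigcap\G$ is not even a member of the family to which you applied orderability, so it has no witness at all.

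A second, related, omission is that the Helly property appears only in your reduction step (1) (which is in any case unnecessary, since $N$-colorability is defined for finite families). In the paper's proof the Helly property is the engine of the main step: one takes the almost-rainbow intersection $K=K_1\cap\dots\cap K_{N-1}$ whose minimal semispace $H$ is containment-\emph{maximal} over all such intersections, adjoins an arbitrary $K_N$ from the missing color, and forms $\F'=\{K_1,\dots,K_N,H\}$; every $N$-element subfamily of $\F'$ has intersection satisfying $\Pp$ (dropping $H$ gives a rainbow intersection, while dropping some $K_j$ gives an almost-rainbow intersection cut by $H$, which contains that same intersection cut by its own, smaller, minimal semispace), so the Helly property with $N=h(\Pp)$ yields $\Pp(K\cap K_N\cap H)$, and the \emph{uniqueness} of the minimal witness $K'\subset K\cap H$ then forces $K'\subset K_N$. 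Since $K_N$ was arbitrary, monotonicity gives $\Pp(\bigcap\F_N)$. Note also that this is a direct argument producing the good color class; your contrapositive framing, which asks you to exhibit a rainbow family violating $\Pp$, is not what your selection procedure delivers. To repair the write-up you would need to (i) order the almost-rainbow intersections rather than the individual sets, (ii) take the extremum over all such selections rather than one champion per color, and (iii) reinstate the Helly property in the key step as above.
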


\begin{proof}[\bf Proof]
Let $\F$ be the family consisting of all the convex sets $\bigcap\G$ where $\G$ is a family of convex sets such that, for some $j$, $\G\cap\F_j=0$ and $\G\cap\F_i=1$ for every $i\neq j$.
Let $v$ be the direction provided by the hypothesis that the property is orderable.

By hypothesis and monotonicity, $\Pp(K)$ is true for every $K\in F$ and therefore there is a semispace $H_K$ as in condition (a) of orderability. Among these semispaces, set $H=H_K$ as the containment-maximal one and let $K'$ be as in condition (b) of orderability.
We may assume that $K=K_1\cap\dots\cap K_{N-1}$ with $K_i\in \F_i$ for $i=1,\dots,N-1$.

Take $K_N$ to be an arbitrary element of $\F_N$, we now show that $K'\subset K_N$.
Consider the family $\F'=\{K_1,\dots,K_{N-1},K_N,H_K\}$, it is not difficult to see that it satisfies \eqref{helly:colorcond} with $m=N$. Condition 1 of Definition \ref{def:color} implies that $\Pp(\bigcap\F')$ is true, but the uniqueness and minimality of $K'$ imply that $K'\subset K_N$. Since $K_N$ is arbitrary, we may conclude that $\Pp(\bigcap\F_N)$ is true.
\end{proof}

As a corollary we can obtain colored versions of three new Helly-type theorems: 

\begin{coro} Let  $\Pp$ be one of the following properties, all of which have an associated Helly-type theorem:
\begin{enumerate}

\item[(1.)] $K$ intersects a fixed set $S \subset \R^d$, where $h(S)<\infty$ and $S$ discrete.
\item[(2.)] $K$ contains at least $k$ integer lattice points (see \cite{ABDLL2014}),
\item[(3.)] $K$ contains at least $k$ points from a set difference of lattices (see \cite{de2015quantitative}),
\end{enumerate}
Then $\Pp$ is $h(\Pp)$-colorable.
\end{coro}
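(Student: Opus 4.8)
The plan is to derive this corollary directly from Theorem~\ref{teo:color} (Generic Colorful Helly), so the whole task reduces to verifying that each of the three listed properties is Helly, monotone, and orderable. Once those three conditions are checked for a property $\Pp$, the theorem immediately yields that $\Pp$ is $h(\Pp)$-colorable, which is the assertion.

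\begin{proof}[\bf Proof]
By Theorem~\ref{teo:color} it suffices to check that each property is Helly, monotone, and orderable.

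\emph{Helly.} For property (1.) this is the hypothesis $h(S)<\infty$; for property (2.) it is the quantitative Doignon theorem of \cite{ABDLL2014}; for property (3.) it is the quantitative result of \cite{de2015quantitative}.

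\emph{Monotone.} In all three cases the property is preserved under enlarging $K$: if $K\subset K'$ and $K$ meets $S$ then so does $K'$; if $K$ contains at least $k$ points of $\Z^d$ (resp.\ of a lattice difference) then $K'$ contains at least as many. So $\Pp(K)\Rightarrow\Pp(K')$.

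\emph{Orderable.} Let $\F$ be a finite family of convex sets. Choose a generic direction $v$, generic in the sense that no two points of $S$ (resp.\ of $\Z^d$, resp.\ of the relevant lattice difference) that lie in any member of $\F$ have the same value of $v^Tx$, and that $v$ is not parallel to any face of any member of $\F$. Since each relevant set is discrete and each $K\in\F$ contains only finitely many of its points, only finitely many linear conditions on $v$ must be avoided, so such a $v$ exists. Now fix $K\in\F$ with $\Pp(K)$ true. Order the points of $S\cap K$ (resp.\ the lattice points in $K$) by increasing value of $v^Tx$; call them $p_1,p_2,\dots$. If $\Pp$ requires meeting $S$, the containment-minimal $v$-semispace $H$ with $\Pp(K\cap H)$ true is $\{x:v^Tx\ge v^Tp_1\}$ if we translate coordinates so this half-space has the form $\{v^Tx\ge0\}$, and the unique minimal $K'\subset K\cap H$ with $\Pp(K')$ true is $\conv\{p_1\}=\{p_1\}$; uniqueness follows because $p_1$ is the unique $v$-minimal point and $\Pp$ is monotone. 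If $\Pp$ requires at least $k$ points, take instead $H=\{v^Tx\ge v^Tp_k\}$ and $K'=\conv\{p_1,\dots,p_k\}$: any convex $K''\subseteq K\cap H$ with $\Pp(K'')$ true must contain $k$ of the points, and the only $k$ points of $K\cap H$ are $p_1,\dots,p_k$ (all earlier points lie in $H$ and all later ones lie strictly above $v^Tp_k$, hence outside $H$ except $p_k$ itself), so $K''\supseteq\conv\{p_1,\dots,p_k\}=K'$; again genericity of $v$ gives uniqueness. The same argument applies verbatim to the lattice-difference case, since that set is also discrete.

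Having verified the three conditions, Theorem~\ref{teo:color} applies and each $\Pp$ is $h(\Pp)$-colorable.
\end{proof}

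The main obstacle is the orderability check, and specifically the requirement in condition 3(b) that there be a \emph{unique} minimal $K'$ with $\Pp(K')$ true; this is where the genericity of the direction $v$ is essential, and where discreteness of the underlying set $S$ is used (for non-discrete $S$ orderability can fail, which is why property (1.) is restricted to discrete $S$). The Helly and monotonicity checks are routine; the content of the corollary is entirely in recognizing these three properties as instances of the abstract framework and confirming the hypotheses hold.
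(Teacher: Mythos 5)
Your proposal is correct and takes exactly the route the paper intends: the paper itself gives no written proof of this corollary beyond the earlier remark that properties (i) and (ii) are ``Helly, monotone and orderable'' and the invocation of Theorem~\ref{teo:color}, so your verification of the three hypotheses is the same argument, just spelled out. One small overstatement to repair in the orderability check: a convex $K\in\F$ may contain infinitely many points of the discrete set (e.g.\ $K=\R^d$, $S=\Z^d$), so the generic direction $v$ must be obtained by avoiding countably many measure-zero conditions rather than finitely many, and the existence of a $v$-minimal point $p_1$ (equivalently, of the containment-minimal semispace $H$) needs a word of justification for unbounded $K$ --- a gap the paper's own ``it is not too difficult to see'' glosses over as well.
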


\section*{Acknowledgements} We are grateful to Andr\'es De Loera-Brust with his suggestions on computing the
Helly number of the prime grid. This research was supported by a UC MEXUS grant that 
helped established the collaboration of the UC Davis and UNAM teams. We are grateful for the support.
The first, second and third author travel was supported in part by the Institute for Mathematics  and its 
Applications and an NSA grant. The third and fourth authors  were also supported by CONACYT project 166306.

\bibliographystyle{amsplain}
\bibliography{justhelly}
\end{document}